\documentclass[11pt]{amsart}

\usepackage{amsmath,amssymb}

\usepackage{mathtools}

\usepackage[all,cmtip]{xy}


\raggedbottom

\allowdisplaybreaks

\begin{document}

\newtheorem{thm}{Theorem}
\newtheorem{lem}{Lemma}
\newtheorem{prop}{Proposition}
\newtheorem{cor}{Corollary}
\newtheorem{defn}{Definition}
\newtheorem{conj}{Conjecture}
\newtheorem{remark}{Remark}

\numberwithin{equation}{section}

\providecommand{\floor}[1]{\left \lfloor #1 \right \rfloor }
\newcommand{\Z}{{\mathbb Z}} 
\newcommand{\Q}{{\mathbb Q}}
\newcommand{\R}{{\mathbb R}}
\newcommand{\C}{{\mathbb C}}
\newcommand{\N}{{\mathbb N}}
\newcommand{\FF}{{\mathbb F}}
\newcommand{\fe}{\overline{\mathbb F}}
\newcommand{\fq}{\mathbb{F}_q}
\newcommand{\feq}{\overline{\mathbb F}_q}
\newcommand{\rmk}[1]{\footnote{{\bf Comment:} #1}}
\renewcommand{\mod}{\;\operatorname{mod}}
\newcommand{\ord}{\operatorname{ord}}
\newcommand{\TT}{\mathbb{T}}
\renewcommand{\i}{{\mathrm{i}}}
\renewcommand{\d}{{\mathrm{d}}}
\newcommand{\HH}{\mathbb H}
\newcommand{\Vol}{\operatorname{vol}}
\newcommand{\area}{\operatorname{area}}
\newcommand{\tr}{\operatorname{tr}}
\newcommand{\norm}{\mathcal N} 
\newcommand{\intinf}{\int_{-\infty}^\infty}
\newcommand{\ave}[1]{\left\langle#1\right\rangle} 
\newcommand{\Var}{\operatorname{Var}}
\newcommand{\Prob}{\operatorname{Prob}}
\newcommand{\sym}{\operatorname{Sym}}
\newcommand{\disc}{\operatorname{disc}}
\newcommand{\CA}{{\mathcal C}_A}
\newcommand{\cond}{\operatorname{cond}} 
\newcommand{\lcm}{\operatorname{lcm}}
\newcommand{\Kl}{\operatorname{Kl}} 
\newcommand{\leg}[2]{\left( \frac{#1}{#2} \right)}  
\newcommand{\sumstar}{\sideset \and^{*} \to \sum}
\newcommand{\LL}{\mathcal L} 
\newcommand{\sumf}{\sum^\flat}
\newcommand{\Hgev}{\mathcal H_{2g+2,q}}
\newcommand{\USp}{\operatorname{USp}}
\newcommand{\conv}{*}
\newcommand{\dist} {\operatorname{dist}}
\newcommand{\CF}{c_0} 
\newcommand{\kerp}{\mathcal K}
\newcommand{\fs}{\mathfrak S}
\newcommand{\rest}{\operatorname{Res}} 
\newcommand{\af}{\mathbb A} 
\newcommand{\Ht}{\operatorname{Ht}}
\newcommand{\Set}{\mathcal P} 

\title[Square-free values of polynomials at primes]{Square-free values of polynomials evaluated at primes over a function field}
\author{Guy Lando}
\address{Raymond and Beverly Sackler School of Mathematical Sciences,
Tel Aviv University, Tel Aviv 69978, Israel}
\email{glando1991@gmail.com}
\date{\today}

\thanks{This work is part of the author’s M.Sc. thesis, written under the supervision of Zeev Rudnick at Tel Aviv University.
 Partially supported by the Israel Science
 Foundation (grant No. 1083/10).}

\begin{abstract}
We study a function field version of a classical problem concerning square-free values of polynomials evaluated at primes. We show that for a square-free polynomial $f\in \fq[t][x]$, there is a limiting density as $n\to \infty$ of primes $P \in \fq[t]$ of degree $n$ such that $f(P)$ is square-free.

Over the integers the analogous result is only known when all irreducible factors of $f$ have degree at most 3.
\end{abstract}

\maketitle

\tableofcontents

\section{Introduction}

\subsection{Statement of results}

In this paper we study a function field version of a classical
problem concerning square-free values of polynomials evaluated at
primes. Given a polynomial $f\in \Z[x]$ with integer
coefficients, it is conjectured that there are infinitely many
primes $p$ for which  $f(p)$ is square-free, provided that $f$ has no repeated factor 
and that it satisfies some obvious congruence condition. 
This conjecture is only known to be true for polynomials having all their irreducible factors of degree at most three. 
Moreover, it is believed that
the set $\Set_{f,2}$ of primes $p$ for which $f(p)$ is square-free
has positive density, namely if
\begin{equation}
\Set_{f,2}(x) = \{p\leq x \textnormal{ prime } : f(p) \textnormal{  is square-free}\}
\end{equation}
then
\begin{conj}\label{conj square-free}
Let $f\in\Z[x]$ be a polynomial with no repeated factor.
 Assume that for each prime $p$
there is at least one integer $n_p$ for which $f(n_p)$ is not
divisible by $p^2$. 
Denote $\rho_f(d)$ to be the number of solutions of $f(x)\equiv 0\pmod{d}$ in
invertible residues modulo $d$.
Then
\begin{equation}
|\Set_{f,2}(x)| \sim c_{f,2} \pi(x), \quad x\to \infty,
\end{equation}
where  $\pi(x)$ denotes the number of prime integers not larger than $x$, and the positive density $c_{f,2}$ is given by
\begin{equation}
c_{f,2} = \prod_p \left(1-\frac{\rho_f\left(p^2\right)}{p^2-p}\right),
\end{equation}
\end{conj}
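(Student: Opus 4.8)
The plan is to detect square-freeness of $f(p)$ through the convolution identity $\mu^2(m)=\sum_{d^2\mid m}\mu(d)$, which turns the count into a sum over square divisors:
\begin{equation*}
|\Set_{f,2}(x)| = \sum_{\substack{p\leq x \\ p \text{ prime}}} \mu^2\big(f(p)\big) = \sum_{d\geq 1}\mu(d)\, S_d(x), \qquad S_d(x):=\#\{p\leq x \text{ prime} : d^2\mid f(p)\}.
\end{equation*}
Since $p$ is prime it is invertible modulo $d$ except for the finitely many $p\mid d$, so $S_d(x)$ is governed by the $\rho_f(d^2)$ roots of $f$ that are invertible modulo $d^2$. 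The whole strategy is to split the $d$-sum at suitable parameters and estimate each range, the ranges behaving completely differently.

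For small moduli, say $d\leq (\log x)^{A}$, each invertible root $a$ of $f(x)\equiv 0\pmod{d^2}$ pins $p$ to a residue class $a\bmod d^2$ with $(a,d)=1$, and the Siegel--Walfisz theorem gives
\begin{equation*}
S_d(x) = \frac{\rho_f(d^2)}{\varphi(d^2)}\,\pi(x) + O\!\left(\rho_f(d^2)\, x\exp(-c\sqrt{\log x})\right),
\end{equation*}
uniformly in this range. The quantity $\mu(d)\rho_f(d^2)/\varphi(d^2)$ is multiplicative and supported on square-free $d$, and since $\varphi(\ell^2)=\ell^2-\ell$ its total sum factors as $\prod_\ell\big(1-\rho_f(\ell^2)/(\ell^2-\ell)\big)=c_{f,2}$; the series converges absolutely because $\rho_f(\ell^2)\ll_f 1$ for $\ell\nmid\disc f$ (so $\rho_f(d^2)\ll_\varepsilon d^\varepsilon$), and the hypothesis that some $n_p$ has $p^2\nmid f(n_p)$ guarantees $c_{f,2}>0$. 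This recovers the conjectured main term $c_{f,2}\,\pi(x)$.

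For the intermediate range $(\log x)^A < d \leq x^{1/2-\delta}$ one would forgo asymptotics and apply an upper-bound sieve (Selberg, or a Brun-type argument) to the union of progressions modulo $d^2$, bounding $\sum_d |\mu(d)|\,S_d(x)$ by $O\big(\pi(x)/(\log x)^{A}\big)$ plus sieve error. Provided the level of distribution reaches $x^{1/2-\delta}$ — which the Bombieri--Vinogradov theorem supplies for the relevant moduli $d^2$ — this contribution is $o(\pi(x))$, so it is harmless.

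The genuine obstacle, and the reason the statement remains a conjecture rather than a theorem, is the \emph{large} range $x^{1/2-\delta} < d \leq x^{(\deg f)/2}$; the upper limit arises because $d^2\leq |f(p)|\ll x^{\deg f}$. Here $d^2>x$, so a residue class modulo $d^2$ contains at most one integer up to $x$ and the progression estimate is vacuous. One must instead bound $\sum_{d>x^{1/2-\delta}} S_d(x)$ directly, i.e.\ control how often $f$ evaluated at a prime is divisible by a square exceeding $x$. This is exactly the \textbf{square-free sieve} problem for $f$ along the primes, and it is where all the difficulty concentrates: the expected bound $o(\pi(x))$ follows from lattice-point counts exploiting the sparsity of solutions of $f(p)\equiv 0\pmod{d^2}$ with $d$ large only when every irreducible factor of $f$ has degree at most $3$ (the Hooley-type range, which is precisely why the classical result is restricted to that case), while for higher-degree factors the tail is intractable by current techniques and can be disposed of only under the $abc$ conjecture. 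Thus, although the sieve skeleton above cleanly produces the density $c_{f,2}$, I expect the large-modulus square-free tail to be the decisive and, in general, unresolved step.
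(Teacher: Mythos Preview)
The paper does not prove this statement: it is labeled \emph{Conjecture~1} precisely because it is open in general over $\Z$, and the paper's contribution is the function-field analogue (Theorem~1), not a proof of the integer statement. So there is no ``paper's own proof'' to compare your proposal against.

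That said, your sketch is an accurate and honest account of the standard heuristic for the conjecture, and you correctly identify the decisive obstruction. The small-modulus range via Siegel--Walfisz and the intermediate range via Bombieri--Vinogradov are exactly the routine parts; the tail $d>x^{1/2-\delta}$ is indeed the whole problem, and you are right that it is currently only handled when every irreducible factor has degree $\leq 3$, or conditionally under $abc$. In other words, your write-up is not a proof and does not claim to be one; it is a diagnosis of why the statement is conjectural. If the intent was to supply a proof, the gap is the large-$d$ tail, and no amount of rearranging the earlier ranges will close it with present methods.

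It may be worth noting, for context, that the function-field proof in the paper succeeds precisely because the analogue of this tail \emph{can} be controlled there: Poonen's $p$-th power decomposition $a=\sum_j t^j a_j^p$ converts the condition $P^2\mid f(a)$ with $\deg P>n/2$ into a simultaneous divisibility $P\mid F(a),\,\partial F/\partial t(a)$ by two coprime polynomials, which is then bounded by an elementary resultant/elimination argument. There is no known substitute for this device over $\Z$, which is exactly why your tail remains intractable.
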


More generally, one can ask for the  density of the set $\Set_{f,k}$
of primes $p$ for which $f(p)$ is $k$-free (meaning $f(p)$ is not
divisible by a $k$-th power). The conjectured density is
\begin{equation}
c_{f,k}=\prod_{P}\left(1-\frac{ \rho_f(p^k)}{\phi(p^k)}\right).
\end{equation}

 Uchiyama \cite{Uchiyama} proved this conjectured density for $k=\deg f$ by a
method that also handles the case $k>\deg f$. The case $k=\deg f-1$
was singled out by Erd\"os, who  conjectured that the set contains
infinitely many primes, and following the works of Hooley
\cite{Hooley},  Nair \cite{Nair1}, \cite{Nair2}  Heath-Brown
\cite{Heath Brown}, Helfgott \cite{Helfgott},  Browning
\cite{Browning} and  Reuss \cite{Reuss}  the quantitative conjecture
for  $k=\deg f-1$ is completely solved. The square-free case ($k=2$)
is currently open for $\deg f>3$. Lee and Murty \cite{Lee and Murty} prove that the ABC conjecture implies the $k$-free conjecture
on primes, for $k \geq 3$. Pasten \cite{Pasten} showed that conjecture~\ref{conj square-free}
follows from the ABC conjecture for number fields.

We turn to the function field version. Let $\fq$ be a finite field
of $q$ elements, where $q=p^m$ is a prime power, and $\fq[t]$ the
polynomial ring. We denote by $M_n(q)$ the set of monic polynomials of
degree $n$. We define the absolute value of $a\in\fq[t]$ to be $|a|=q^{\deg a}$. We denote by $\pi_q(n)$ the set of monic irreducible polynomials of
degree $n$, so that $|\pi_q(n)| =\frac{q^n}{n} + O\left(\frac{q^{n/2}}{n}\right)$.

Let $f(x)\in \fq[t][x]$. Monic irreducible polynomials will be called prime polynomials. A polynomial $a(t)\in\fq[t]$ is called square-free if there is
no $P\in\fq[t]$ such that $\deg P>0$ and $P^2\mid a$. We denote by $\Set(n)=\Set_{f,2}(n)$ the set
of prime polynomials $P(t)\in M_n(q)$ such that $f(P)$ is square-free. 
We prove an analogue of
Conjecture \ref{conj square-free} for square-free values,
also establishing asymptotic bounds on the error term.

A polynomial $f\in\fq[t][x]$ is called square-free if there is
no $P\in\fq[t][x]$ such that $P^2\mid f$ and the degree of $P$ as a polynomial in $t, x$ is positive.

 \begin{thm}\label{main thm}
Assume $f\in \fq[t][x]$ is square-free. For a polynomial $D\in
\fq[t]$, define 
$$\rho_f(D)=|\{C\in\fq[t]: \deg C < \deg D, \gcd(D,C)=1,  f(C)\equiv0\pmod{D}\}|.$$
Then
\begin{equation}\label{Main Eq}
\frac{|\Set_{f,2}(n)|}{|\pi_q(n)|} = c_{f,2} + O_{f,q}\left(\frac
{1}{\log_q n}\right)\quad \mbox{as } n\to \infty,
\end{equation}
with
\begin{equation}
c_{f,2}=\prod_P \left(1-\frac{\rho_f\left(P^2\right)}{|P|^2-|P|}\right),
\end{equation}
 where the product runs over the prime polynomials $P$.
The implied constant in the error term $O_{f,q}\left(\frac
{1}{\log_q n}\right)$ depends only on $f$ and the finite field size $q$.
\end{thm}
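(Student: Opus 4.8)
The plan is to detect square-freeness by M\"obius inversion over square divisors and then to split the resulting sum at a threshold that is logarithmic in $q^n$. Using $\mathbf 1[m\ \text{square-free}]=\sum_{D^2\mid m}\mu(D)$ and interchanging summation,
\begin{equation*}
|\Set_{f,2}(n)|=\sum_{P\in\pi_q(n)}\ \sum_{D^2\mid f(P)}\mu(D)=\sum_{D}\mu(D)\,\pi(n;D^2),\qquad \pi(n;D^2):=\bigl|\{P\in\pi_q(n):D^2\mid f(P)\}\bigr|,
\end{equation*}
where $D$ runs over monic polynomials. Since $\deg_t f(P)\le(\deg_x f)\,n+O_f(1)$, only $D$ with $\deg D\le\tfrac12(\deg_x f)\,n+O_f(1)$ occur. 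I would fix a parameter $B\asymp\log_q n$ and treat three ranges separately: $\deg D\le B$ (main term), $B<\deg D\le\tfrac12\deg P$ (medium tail), and $\deg D>\tfrac12\deg P$ (large tail).

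For $\deg D\le B$, a prime $P$ with $D^2\mid f(P)$ lies, by the Chinese Remainder Theorem and the definition of $\rho_f$, in one of $\rho_f(D^2)$ invertible residue classes modulo $D^2$. By the prime polynomial theorem in arithmetic progressions (a consequence of the Riemann Hypothesis for curves, via orthogonality of Dirichlet characters modulo $D^2$ and the Weil bound on the associated $L$-functions), each such class contains $\tfrac{1}{\phi(D^2)}|\pi_q(n)|+O\!\bigl(\tfrac{(\deg D)q^{n/2}}{n}\bigr)$ primes of degree $n$, so that $\pi(n;D^2)=\tfrac{\rho_f(D^2)}{\phi(D^2)}|\pi_q(n)|+O\!\bigl(\rho_f(D^2)(\deg D)q^{n/2}/n\bigr)$. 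Summing, and using that $\rho_f$ and $\phi$ are multiplicative, identifies the completed sum $\sum_D\mu(D)\rho_f(D^2)/\phi(D^2)$ with the Euler product $c_{f,2}=\prod_P\bigl(1-\rho_f(P^2)/(|P|^2-|P|)\bigr)$, since $\phi(P^2)=|P|^2-|P|$; the tail beyond degree $B$ of this convergent product is also absorbed into the error. The error from this range is $\tfrac{q^{n/2}}{n}\sum_{\deg D\le B}\rho_f(D^2)\deg D=O\!\bigl(q^{n/2+B}B^{O(1)}/n\bigr)$, which after division by $|\pi_q(n)|\sim q^n/n$ is $O\!\bigl(q^{B-n/2}B^{O(1)}\bigr)$, negligible for $B\asymp\log_q n$.

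For the medium tail $B<\deg D\le\tfrac12\deg P$ I would use a box count: $\pi(n;D^2)\le\bigl|\{P\in M_n(q):D^2\mid f(P)\}\bigr|=\rho_f^{*}(D^2)\,q^{n-2\deg D}$, where $\rho_f^{*}$ counts all roots of $f$ modulo $D^2$, the identity holding because a residue class modulo $D^2$ (of degree $2\deg D\le n$) meets $M_n(q)$ in exactly $q^{n-2\deg D}$ monic polynomials. Using Hensel's lemma away from the (fixed) discriminant $\rest_x(f,f_x)$ one gets $\sum_{\deg D=k}\rho_f^{*}(D^2)=O(q^k k^{O(1)})$, so this range contributes $\le q^n\sum_{k>B}O(q^{-k}k^{O(1)})=O\!\bigl(q^{n-B}B^{O(1)}\bigr)$, that is $O(|\pi_q(n)|/\log_q n)$ for $B\asymp\log_q n$; this is the source of the stated error term.

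The \emph{main obstacle} is the large tail $\deg D>\tfrac12\deg P$, where $\deg D^2>n$ and the box count collapses. Here I would invoke derivative/Hensel rigidity: choosing $u,v\in\fq[t][x]$ with $uf+vf_x=\rest_x(f,f_x)$ and substituting $x=P$, one sees that for any prime $Q\mid D$ with $Q\nmid\rest_x(f,f_x)$ the residue $P\bmod Q$ is a \emph{simple} root of $f$, so $P\bmod Q^2$ is its unique Hensel lift; by the Chinese Remainder Theorem $P\bmod D^2$ is then confined to $\le\rho_f(D)$ admissible classes, each of which contains at most one $P\in M_n(q)$ because $\deg D^2>n$. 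The crux is that summing this bound naively over all $D$ up to degree $\tfrac12(\deg_x f)n$ diverges; one must exploit the realizability constraint — that the unique class representative actually have degree exactly $n$, a condition of relative density $q^{n-2\deg D}$ — to recover a bound of size $O\!\bigl(q^{(1-\delta)n}\bigr)$, which is negligible against $|\pi_q(n)|$. Two features must be handled separately: the finitely many exceptional primes $Q\mid\rest_x(f,f_x)$ (bounded total degree), and, in characteristic $p$, the possibility that $f$ is inseparable in $x$ so that $f_x\equiv 0$; in that case I would differentiate in $t$ instead and use that $f$ is square-free as a bivariate polynomial, whence the plane curve $f=0$ has only finitely many singular points, whose $t$-coordinates contribute only a bounded-degree factor to $D$.
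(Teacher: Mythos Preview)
Your small and medium ranges are essentially fine (the $k^{O(1)}$ factor in $\sum_{\deg D=k}\rho_f^{*}(D^2)$ forces $B$ to be a sufficiently large constant multiple of $\log_q n$, not merely $\asymp\log_q n$, but this costs nothing in the other ranges). The genuine gap is the large tail $\deg D>n/2$. Your ``realizability constraint'' --- that among the $\le\rho_f(D)$ Hensel-lifted classes modulo $D^2$, only a proportion $q^{n-2\deg D}$ should have their unique representative of degree exactly $n$ --- is a heuristic, not a proof: those classes are specific roots of $f$, and you offer no equidistribution statement for their degrees. This is precisely the obstruction that leaves the analogous conjecture over $\Z$ open for $\deg f>3$, and nothing in your sketch exploits any special feature of $\fq[t]$ to circumvent it. Your closing remark about differentiating in $t$ points in the right direction, but you invoke it only for inseparable $f$; in fact it is the key in all cases, and a naive $t$-derivative of $f(a(t))$ yields $\frac{\partial f}{\partial t}(a)+\frac{\partial f}{\partial x}(a)\cdot a'(t)$, which involves $a'$ and is therefore not a fixed auxiliary polynomial in $a$ that one can play off against $f$.

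The paper's resolution (following Poonen) is specifically characteristic-$p$: write each $a\in M_n(q)$ as $a=\sum_{j=0}^{p-1}t^{j}a_j(t)^{p}$ with $\deg a_j\le\lfloor n/p\rfloor$, and set $F(y_0,\dots,y_{p-1})=f\bigl(\sum_{j}t^{j}y_j^{p}\bigr)$. Since $F\in\fq[t][y_0^{p},\dots,y_{p-1}^{p}]$, every $\partial F/\partial y_j$ vanishes identically, so the chain rule collapses to $\tfrac{d}{dt}F(a_0,\dots,a_{p-1})=(\partial F/\partial t)(a_0,\dots,a_{p-1})$ --- a \emph{fixed} polynomial in the $a_j$, with no stray derivatives. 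Hence $P^2\mid f(a)$ with $\deg P>n/2$ forces $P$ to divide both $F$ and $\partial F/\partial t$ evaluated at $(a_0,\dots,a_{p-1})$; these are coprime in $\fq(t)[y_0,\dots,y_{p-1}]$ unless $c_{f,2}=0$, and an elimination/induction on the number of variables then bounds the number of such tuples by $O\bigl(q^{n(p-1)/p}\bigr)$, which is negligible against $|\pi_q(n)|$. This $p$-th-power parametrisation, which makes the total $t$-derivative equal the partial one and thereby manufactures the second coprime polynomial, is the idea you are missing.
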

Note that the constant $c_{f,2}$ is positive if and only if for all primes $P$, there is some $C\in\fq[t]$ with $\deg C<\deg P^2$, $C$ coprime to $P$, such that $f(C) \neq 0 \pmod{P^2}$. See Section \S~\ref{degenerate case} for a discussion.

\subsection{Plan of the proof}

Take $M\in\N$ which will be chosen later,
 and let
\begin{equation}
\Set'(n, M) =\{a\in \pi_q(n): P^2\nmid f(a), \forall P \mbox{ prime with }
\deg P<M\}
\end{equation}
and
\begin{equation}
\Set''(n, M) =\{a\in \pi_q(n): \exists P, \deg P\geq M,\mbox{ s.t. } P^2\mid
f(a)\}
\end{equation}
Then clearly
\begin{equation}
\mathcal \Set(n) \subset \mathcal \Set'(n, M) \subset \mathcal \Set(n) \cup
\mathcal \Set''(n, M),
\end{equation}
so that
\begin{equation}
|\mathcal \Set'(n, M)|-|\mathcal \Set''(n, M)| \leq  |\mathcal \Set(n)|\leq
|\mathcal \Set'(n, M)|.
\end{equation}
Thus it suffices to give an asymptotic estimate for $|\Set'(n, M)|$ (the ``main
term''), which is easy if $M$ is small, and an upper bound for
$|\Set''(n, M)|$ (the ``error term"). We will show in Proposition \ref{prop:1.5} that
\begin{equation}\label{final for N'}
|\Set'(n, M)| = c_{f,2} \frac{q^n}{n} + O\left(\frac{q^n}{nMq^M}\right)+O\left(\frac{q^{\frac{n}{2}+4q^M+M}}{n}\right),
\end{equation}
and in Proposition \ref{prop:2} that
\begin{equation}\label{final estimate for N''}
|\Set''(n, M)|= O\left(\frac{q^n}{Mq^M}+q^{n\frac{p-1}{p}}\right).
\end{equation}
Choosing $M=\lfloor\log_q\frac{n}{9}\rfloor$ in \eqref {final for N'} and \eqref{final estimate for N''} gives
$$|\Set'(n, M)| = c_{f,2} \frac{q^n}{n} + O\left(\frac{q^n}{n^2\log_q{n}}\right)$$
and
$$|\Set''(n, M)|\ll\frac{q^n}{n\log_q{n}},$$
which together yield
\begin{equation}\label{main result}
|\Set_{f,2}(n)| = c_{f,2} |\pi_q(n)| + O\left(\frac{q^n}{n\log_q n}\right).
\end{equation}
This proves Theorem \ref{main thm}.

The proof of \eqref{final for N'} is carried out using a sieve method and an estimate on the size of the set $\pi_q(n;Q,A)$ of primes in arithmetic progression, defined as:
$$\pi_q(n;Q,A)=\{P\in\pi_q(n) : P\equiv A \pmod{Q}\}.$$

The crucial bound \eqref{final estimate for N''} for the
contribution of large primes uses ideas of Ramsay \cite{Ramsay}
and Poonen \cite{Poonen}, formulated in their work on the
related question of square-free values taken at arbitrary
(non-prime)  polynomials, which we will explain in the proof of \eqref{final estimate for N''}.

As a final comment, we point out that we have dealt here with the
limit of large degree $n$ and fixed finite field size $q$. One can
also ask an analogous question for the limit of $q\to \infty$ and $n$
fixed. Define the content of $f(x)\in \fq[t][x]$ to be the monic greatest common divisor of its coefficients, which is an element of $\fq[t]$. In this case, it follows from the recent work of Rudnick
\cite{Rudnick} that 
for any sequence of finite fields $\fq$ of cardinality $q\to \infty$, and any choice of separable $f_q\in \fq[t][x]$ with square-free content, $f_q(P)$ is square-free with probability 1, that is
\begin{equation}
\lim_{q\to \infty} \frac{|\Set_{f_q,2}(n)|}{|\pi_q(n)|}=1.
\end{equation}

This is because in \cite{Rudnick} it is shown that with probability
1 as $q\to \infty$,  for an arbitrary polynomial $a\in M_n$,
$f_q(a)$ is square-free. Since the primes have positive density
(namely $1/n$) in the set of all monic polynomials of degree $n$,
the result follows.

\section{Estimating the main term}

We now turn back to the proof of Theorem \ref{main thm}. We fixed $M\in\N$ and defined
\begin{equation}
\begin{split}
\nonumber
\Set'(n, M) &=\{a\in \pi_q(n): P^2\nmid f(a), \forall P \mbox{ prime with }\deg P<M\},
\\
\Set''(n, M) &=\{a\in \pi_q(n): \exists P, \deg P\geq M,\mbox{ s.t. } P^2\mid f(a)\}.
\end{split}
\end{equation}
We wish to prove that for $0\ll M\leq\frac{n}{2}$ it holds that
\begin{equation}
\nonumber
|\Set'(n, M)| = c_{f,2} \frac{q^n}{n} + O\left(\frac{q^n}{nMq^M}\right)+O\left(\frac{q^{\frac{n}{2}+4q^M+M}}{n}\right)
\end{equation}
and
\begin{equation}
\nonumber
|\Set''(n, M)|\leq\frac{2q^n\deg f}{Mq^M}+O\left(q^{n\frac{p-1}{p}}\right).
\end{equation}
The rest of the paper will use the following notation:
\begin{itemize}
 \item $n,N$ natural numbers
 \item For $a\in \fq[t]^N $, $a_i$ is the value of the $i$-th coordinate of $a$
  \item $P$ is a prime in $\fq[t]$
  \item $f(x)\in\fq[t][x]$ is a square-free polynomial 
\end{itemize}
For $Q\in\fq[t]$, define $\phi(Q)=|\{a\in\fq[t] : \deg a<\deg Q, \gcd(Q, a)=1\}|$.

In the the following proof, we need an estimate for the size of the set of primes of degree $n$ in an arithmetic progression: $\pi_q(n;Q,A).$
We get the estimate using the Prime Polynomial Theorem in arithmetic progressions, with a remainder term given by the Riemann Hypothesis for curves over a finite field (Weil's Theorem) which was first proved in \cite{Weil}:

\begin{lem}[Weil's Theorem]
 For $Q, A\in\fq[t]$ with $\gcd(Q,A)=1$,
 \begin{equation}\label{eq:4.1}
 |\pi_q(n;Q,A)|=\frac{q^n}{n\phi(Q)}+O\left(\frac{q^{\frac{n}{2}}}{n}\deg Q\right)
 \end{equation}
\end{lem}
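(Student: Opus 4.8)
The plan is to detect the congruence $P \equiv A \pmod{Q}$ by orthogonality of Dirichlet characters modulo $Q$, and then to estimate the resulting character sums through their $L$-functions, with the Riemann Hypothesis for curves supplying the square-root cancellation that produces the error term.

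First I would observe that since $\gcd(Q,A)=1$, every prime counted by $\pi_q(n;Q,A)$ is automatically coprime to $Q$ (a prime dividing $Q$ would force $A\equiv 0$). Hence, by orthogonality of the $\phi(Q)$ characters $\chi$ modulo $Q$,
$$|\pi_q(n;Q,A)| = \frac{1}{\phi(Q)}\sum_{\chi \bmod Q}\overline{\chi}(A)\sum_{P\in\pi_q(n)}\chi(P).$$
The principal character $\chi_0$ contributes $1/\phi(Q)$ times the number of primes of degree $n$ not dividing $Q$; as there are at most $\deg Q$ primes dividing $Q$, the Prime Polynomial Theorem $|\pi_q(n)| = q^n/n + O(q^{n/2}/n)$ yields the main term $q^n/(n\phi(Q))$ up to an error absorbed by the statement. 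The whole task therefore reduces to proving $\sum_{P\in\pi_q(n)}\chi(P) = O(q^{n/2}\deg Q/n)$ for each non-principal $\chi$.

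For a non-principal $\chi$ I would introduce the $L$-function $L(u,\chi)=\sum_{f\ \mathrm{monic}}\chi(f)u^{\deg f} = \prod_P\bigl(1-\chi(P)u^{\deg P}\bigr)^{-1}$. The key structural fact is that $L(u,\chi)$ is a polynomial in $u$ of degree at most $\deg Q - 1$: for $m\ge \deg Q$ the monic polynomials of degree $m$ are equidistributed among the residue classes modulo $Q$, so the coefficient $\sum_{\deg f=m}\chi(f)$ vanishes. Writing $L(u,\chi)=\prod_j(1-\alpha_j u)$ and taking the logarithmic derivative of the Euler product gives the identity $uL'(u,\chi)/L(u,\chi)=\sum_{n\ge1}\psi(n,\chi)u^n$, where $\psi(n,\chi)=\sum_{\deg f=n}\Lambda(f)\chi(f)$ and $\Lambda$ is the von Mangoldt function on $\fq[t]$; comparing coefficients yields $\psi(n,\chi)=-\sum_j\alpha_j^n$. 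At this point I invoke the Riemann Hypothesis for curves (Weil's theorem), which forces $|\alpha_j|\le q^{1/2}$ and hence $|\psi(n,\chi)|\le(\deg Q-1)q^{n/2}$.

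Finally I would pass from $\psi(n,\chi)$ back to the prime sum. Separating the prime-power terms $f=P^k$ with $k\ge2$ (and $k\deg P=n$), each value of $k$ contributes at most $q^{n/k}\le q^{n/2}$ in total, so $n\sum_{P\in\pi_q(n)}\chi(P)=\psi(n,\chi)+O(q^{n/2})$, giving $\sum_{P\in\pi_q(n)}\chi(P)=O(q^{n/2}\deg Q/n)$. Summing over the $\phi(Q)-1$ non-principal characters and dividing by $\phi(Q)$ cancels the factor $\phi(Q)$ and leaves exactly the stated error $O\bigl(\tfrac{q^{n/2}}{n}\deg Q\bigr)$. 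The genuinely deep ingredient, and the only non-elementary step, is the Riemann Hypothesis for curves bounding the inverse roots $\alpha_j$ by $q^{1/2}$; everything else is formal orthogonality and $L$-function bookkeeping.
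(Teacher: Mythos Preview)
Your proposal is correct and follows essentially the same route as the paper's sketch: orthogonality of Dirichlet characters modulo $Q$, the explicit formula relating the von Mangoldt-weighted sum $\psi(n,\chi)$ to the inverse roots of $L(u,\chi)$, the polynomiality of $L(u,\chi)$ of degree at most $\deg Q-1$ for non-principal $\chi$, and Weil's bound $|\alpha_j|\le q^{1/2}$. The only cosmetic difference is that the paper applies the explicit formula directly to the progression sum and then expands over characters, whereas you expand over characters first and then apply the explicit formula to each $\psi(n,\chi)$; these are the same computation in a different order.
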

 Lemma 1 is proved, similarly to the proof of Theorem 4.8 in \cite{Rosen}, from the Riemann Hypothesis in the same way that the corresponding statement over the integers is deduced from the Generalized Riemann Hypothesis, see e.g. \cite[Chapter 20]{Davenport}. It is done, by using the "Explicit Formula"   to express a sum $\sum_{\substack{\deg(P^k)=n\\p^k=A \mod Q}} \deg P$ over prime powers in the arithmetic progression (the higher prime powers are easily shown to give a negligible amount),  with an average  over zeros of all L-functions associated to Dirichlet characters modulo $Q$; the trivial character gives the main term, and each of the remaining nontrivial characters, for which the associated L-function is a polynomial in $u=q^{-s}$ of degree at most $\deg Q-1$, all of whose inverse zeros are in the disc $|u|\leq \sqrt{q}$, contributes at most $(\deg Q-1)q^{n/2}/\phi(Q)$. The division by $n$ arises from the extra factor of $\deg P$ in the Explicit Formula.

\begin{remark}\label{Discriminant-remark}
Suppose that $f\in\fq[t][x]$ is square-free and $P\in\fq[t]$ is prime. Denote the discriminant of $f$ over $\fq(t)$ by $\Delta(f)$ and denote by $\Delta_{\fq[t]/\langle P\rangle}(f)$ the discriminant of $f$ over $\fq[t]/\langle P\rangle$. It holds that $\Delta(f)\neq0$ and for $P\in\fq[t]$ such that $\deg P>\deg\Delta(f)$, we can conclude that $P\nmid\Delta(f)$.

Now assume that $\Delta_{\fq[t]/\langle P\rangle}(f)=0$ and $P$ does not divide the leading coefficient of $f$. It holds that 
$$\Delta(f)\pmod{P}\equiv\Delta_{\fq[t]/\langle P\rangle}(f)\equiv0\pmod{P}$$
which allows to conclude that $P|\Delta(f)$.
\end{remark}
We will use the following version of Hensel's lemma in the paper
\begin{lem}[Hensel's Lemma]\label{Hensel's Lemma}
 Suppose that $f\in\fq[t][x]$ is square-free and $P\in\fq[t]$ is prime such that $\deg P>\deg\Delta(f)$ where $\Delta(f)$ is the discriminant of $f$ over $\fq(t)$. Also assume that $P$ does not divide the leading coefficient of $f$.
Then
\begin{equation}
\begin{split}
&|\{b\in\fq[t] : \deg b<\deg P^2, f(b)\equiv0\pmod{P^2}\}|
\\
&=|\{a\in\fq[t] : \deg a<\deg P, f(a)\equiv0\pmod{P}\}|
\end{split}
\end{equation}
\end{lem}
\begin{proof}
Suppose that $c\in\fq[t]$ and $f(c)\equiv0\pmod{P}$. Take $d\in\fq[t]$ such that $d\equiv c\pmod P$. Thus $d=c+tP$.
From the formal derivative formula for $f$ we have that
\begin{equation}\label{Hens-lem-eq-1}
f(d)=f(c+tP)=f(c)+tP\frac{\partial f}{\partial x}(c)+P^2(\cdots).
\end{equation}
Now, since $f(c)\equiv0\pmod{P}$ it follows that there is some $s\in\fq[t]$ such that $f(c)=sP$. 
Combining this with \eqref{Hens-lem-eq-1} gives
$$f(d)\equiv (s+t\frac{\partial f}{\partial x}(c))P\pmod{P^2}$$
thus
\begin{equation}\label{Hens-lem-eq-1.25}
f(d)\equiv0\pmod{P^2}\Longleftrightarrow s+t\frac{\partial f}{\partial x}(c)\equiv0\pmod{P}.
\end{equation}
By remark \ref{Discriminant-remark}  we get that
\begin{equation}\label{Hens-lem-eq-1.5}
P\nmid\Delta(f). 
\end{equation}
So if $f(c)\equiv0\pmod{P}$ and $\frac{\partial f}{\partial x}(c)\equiv0\pmod{P}$ then from the basic property of discriminant we get that the discriminant of $f$ over $\fq[t]/\langle P\rangle$, which will be denoted by $\Delta_{\fq[t]/\langle P\rangle}(f)$,
 is 0. By remark \ref{Discriminant-remark} this implies that $P|\Delta(f)$ which is a contradiction to \eqref{Hens-lem-eq-1.5}.
Thus $\frac{\partial f}{\partial x}(c)\not\equiv0\pmod{P}$ and $\frac{\partial f}{\partial x}(c)\pmod{P}$ is invertible. Denote $h(c)=\frac{\partial f}{\partial x}(c)^{-1}\pmod{P}$. Combining this with \eqref{Hens-lem-eq-1.25} we get that 
$$f(d)\equiv0\pmod{P^2}$$ 
for 
\begin{equation}\label{Hens-lem-eq-2}
d\equiv c-f(c)h(c)\pmod{P^2}.
\end{equation}
This gives a solution $d\in\fq[t]$ to $f(d)\equiv0\pmod{P^2}$ such that $d\equiv c\pmod{P}$ and according to the equation \eqref{Hens-lem-eq-2} the solution $d$ is unique modulo $P^2$, which proves the lemma.
\end{proof}

We will also use the following lemma
\begin{lem}\label{bound-lem}
Suppose that $f\in\fq[t][x]$ is square-free and $P\in\fq[t]$ is prime.
Denote the discriminant of $f$ over $\fq(t)$ by $\Delta(f)$, denote by $w_f(t)\in\fq[t]$ the leading coefficient of $f$ as a polynomial in $x$ over $\fq[t]$, and denote by $\deg f$ the degree of $f$ as a polynomial in $x$ over $\fq[t]$.
Then
\begin{equation}
\begin{split}
&|\{b\in\fq[t] : \deg b<\deg P^2, f(b)\equiv0\pmod{P^2}\}|
\\
&\leq \max\{\deg f, q^{2\max\{\deg\Delta(f), \deg w_f\}}\}=O(1)
\end{split}
\end{equation}
and the implied constant in the bound depends only on $f$ and the finite field size $q$.
\end{lem}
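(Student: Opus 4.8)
The plan is to prove the bound by a simple dichotomy on the size of the prime $P$, measured against the fixed quantity $B:=\max\{\deg\Delta(f),\deg w_f\}$, which depends only on $f$. The two regimes are exactly the two terms appearing in the $\max$ on the right-hand side: large primes are controlled by Hensel's lemma together with the fact that a polynomial over a field has at most as many roots as its degree, while small primes are handled by the trivial bound on the total number of residues modulo $P^2$.

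First I would treat the case $\deg P>B$. Here $\deg P>\deg\Delta(f)$ and $\deg P>\deg w_f$, the latter forcing $P\nmid w_f$, so both hypotheses of Lemma~\ref{Hensel's Lemma} are met. Applying it gives
\[
|\{b\in\fq[t]:\deg b<\deg P^2,\ f(b)\equiv0\pmod{P^2}\}|
=|\{a\in\fq[t]:\deg a<\deg P,\ f(a)\equiv0\pmod{P}\}|.
\]
Since $P$ is prime, $\fq[t]/\langle P\rangle$ is a field, and the reduction of $f$ modulo $P$ is a polynomial in $x$ of degree at most $\deg f$ over this field; hence it has at most $\deg f$ roots. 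Therefore in this regime the count is bounded by $\deg f$.

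Next I would treat the case $\deg P\le B$. Here I use only the crude observation that the number of residues $b$ with $\deg b<\deg P^2$ is exactly $q^{2\deg P}$, so the number of roots is at most $q^{2\deg P}\le q^{2B}=q^{2\max\{\deg\Delta(f),\deg w_f\}}$. Combining the two cases yields the stated bound $\max\{\deg f,\ q^{2\max\{\deg\Delta(f),\deg w_f\}}\}$, and since $\deg f$, $\deg\Delta(f)$ and $\deg w_f$ are all determined by $f$ (with $q$ fixed), this is a constant, giving the $O(1)$ with implied constant depending only on $f$ and $q$.

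There is no genuinely hard step here; the only points requiring care are bookkeeping. I would make sure to verify \emph{both} hypotheses of Hensel's lemma in the large-prime case — in particular that $P$ does not divide the leading coefficient $w_f$, which is precisely why $B$ incorporates $\deg w_f$ and not merely $\deg\Delta(f)$ — and to invoke the field structure of $\fq[t]/\langle P\rangle$ for the root count. The remaining small-prime contribution is absorbed by the trivial counting bound, and the choice of the threshold $B$ is exactly what makes the two regimes dovetail into the single expression in the statement.
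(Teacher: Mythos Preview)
Your proposal is correct and follows essentially the same argument as the paper: split into the two regimes $\deg P>\max\{\deg\Delta(f),\deg w_f\}$ and $\deg P\le\max\{\deg\Delta(f),\deg w_f\}$, apply Hensel's lemma plus the root bound over the field $\fq[t]/\langle P\rangle$ in the first, and the trivial residue count $|P|^2$ in the second. The paper phrases the root bound by first observing $f\not\equiv0\pmod{P}$ (since $P\nmid w_f$), which is precisely what makes your ``degree at most $\deg f$'' estimate valid, so there is no substantive difference.
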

\begin{proof}
If $\deg P>\max\{\deg\Delta(f), \deg w_f\}$ then using Hensel's Lemma (lemma \ref{Hensel's Lemma}) we get
\begin{equation}\label{bound-lem-eq-2}
\begin{split}
&|\{b\in\fq[t] : \deg b<\deg P^2, f(b)\equiv0\pmod{P^2}\}|
\\
&=|\{a\in\fq[t] : \deg a<\deg P, f(a)\equiv0\pmod{P}\}|
\end{split}
\end{equation}
and since $\deg P>\deg w_f$ it follows that $f\not\equiv0\pmod{P}$. The number of roots of a polynomial in $x$ over the field $\fq[t]/\langle P\rangle$ is bounded by its degree in $x$ and thus
$$|\{a\in\fq[t] : \deg a<\deg P, f(a)\equiv0\pmod{P}\}|\leq \deg f.$$
Combining this with \eqref{bound-lem-eq-2} we get that if $\deg P>\max\{\deg\Delta(f), \deg w_f\}$ then
\begin{equation}\label{bound-lem-eq-3}
|\{b\in\fq[t] : \deg b<\deg P^2, f(b)\equiv0\pmod{P^2}\}|\leq\deg f.
\end{equation}
On the other hand, if $\deg P\leq\max\{\deg\Delta(f), \deg w_f\}$ then
\begin{equation}\label{bound-lem-eq-4}
\begin{split}
|\{b\in\fq[t] : \deg b<\deg P^2, f(b)\equiv0\pmod{P^2}\}|\leq|P|^2
\\
=q^{2\deg P}\leq q^{2\max\{\deg\Delta(f), \deg w_f\}}.
\end{split}
\end{equation}
The result is obtained by combining \eqref{bound-lem-eq-3} and \eqref{bound-lem-eq-4}.
\end{proof}

Finally, we will use the following lemma which is called the "Explicit Formula" and is proved in Proposition 2.1 in \cite{Rosen}
\begin{lem}[The Explicit Formula]\label{Explicit Formula}
For integers $q$ and $n$, the following holds $\sum_{d\mid n}d|\pi_q(d)|=q^n$. In particular, $i|\pi_q(i)|\leq q^i$ for all integers $i\leq n$.
\end{lem}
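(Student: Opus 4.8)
The plan is to deduce the identity from the standard factorization of the polynomial $t^{q^n}-t$ over $\fq$. First I would recall that, in a fixed algebraic closure $\feq$, the roots of $t^{q^n}-t$ are precisely the elements of the subfield $\FF_{q^n}$, and that since the formal derivative of $t^{q^n}-t$ equals $-1$, this polynomial is separable. Hence it is a product of distinct monic irreducible factors over $\fq$, each occurring to the first power.

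The key step is to identify exactly which monic irreducibles occur. For $P\in\pi_q(d)$, a root of $P$ generates $\FF_{q^d}$ over $\fq$, and such a root lies in $\FF_{q^n}$ if and only if $\FF_{q^d}\subseteq\FF_{q^n}$, which by Galois theory of finite fields holds precisely when $d\mid n$. Therefore $P$ divides $t^{q^n}-t$ exactly when $\deg P\mid n$, and by separability it does so exactly once. This gives
\[
t^{q^n}-t=\prod_{d\mid n}\ \prod_{P\in\pi_q(d)}P.
\]
Comparing degrees of the two sides yields the identity: the left-hand side has degree $q^n$, while the right-hand side has degree $\sum_{d\mid n}\sum_{P\in\pi_q(d)}\deg P=\sum_{d\mid n}d\,|\pi_q(d)|$. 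The ``in particular'' assertion is then immediate, since every summand $d\,|\pi_q(d)|$ is nonnegative, so isolating the term $d=i$ in $\sum_{d\mid i}d\,|\pi_q(i)|=q^i$ gives $i\,|\pi_q(i)|\leq q^i$ for every $i$ (in particular for all $i\leq n$).

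There is no serious obstacle here, as the result is entirely standard; the only point requiring a little care is the characterization of the irreducible factors of $t^{q^n}-t$, which rests on the equivalence $\FF_{q^d}\subseteq\FF_{q^n}\iff d\mid n$ together with the separability of $t^{q^n}-t$. Once these are in hand the identity follows by a degree count. An alternative route, avoiding the factorization, would be to compare the two expressions for the zeta function $\zeta_{\fq[t]}(s)=(1-qu)^{-1}=\prod_P(1-u^{\deg P})^{-1}$ with $u=q^{-s}$, apply $u\,\frac{\d}{\d u}\log(\cdot)$ to both sides, and read off the coefficient of $u^n$; but the factorization argument is shorter and more transparent.
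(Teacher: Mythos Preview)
Your proof is correct and is the standard argument. The paper itself does not give a proof of this lemma at all; it simply cites Proposition~2.1 of Rosen \cite{Rosen}, where exactly this factorization-of-$t^{q^n}-t$ argument appears. So there is nothing to compare against, and your write-up would serve perfectly well as a self-contained replacement for the citation.

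One small typo: in the ``in particular'' paragraph you wrote $\sum_{d\mid i}d\,|\pi_q(i)|=q^i$; the summand should be $d\,|\pi_q(d)|$, not $d\,|\pi_q(i)|$.
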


We will now prove:
\begin{prop}\label{prop:1.5}
For $0\ll M\leq n$,
$$|\Set'(n, M)| = c_{f,2} \frac{q^n}{n}+O\left(\frac{q^n}{nMq^M}\right)+O\left(\frac{q^{\frac{n}{2}+4q^M+M}}{n}\right).$$
\end{prop}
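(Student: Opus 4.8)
The plan is to evaluate $|\Set'(n,M)|$ by a finite inclusion–exclusion (sieve) over the primes of degree less than $M$, reducing the count to a sum of counts of primes in arithmetic progressions modulo prime squares, which are controlled by Weil's Theorem (Lemma 1). Write $Q=\prod_{\deg P<M}P$ for the product of all primes of degree less than $M$. Since for distinct primes the divisibility conditions $P^2\mid f(a)$ combine, modulo $\prod P^2$, by the Chinese Remainder Theorem, Möbius inversion gives
\begin{equation}
\nonumber
|\Set'(n,M)|=\sum_{\substack{D\mid Q\\ D\text{ square-free}}}\mu(D)\,\bigl|\{a\in\pi_q(n): f(a)\equiv0\pmod{D^2}\}\bigr|,
\end{equation}
where $D^2\mid f(a)$ is equivalent to $P^2\mid f(a)$ for every $P\mid D$. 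I would establish this identity first.

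To count the inner set for a fixed $D$, I would partition $\pi_q(n)$ according to the residue $a\bmod D^2$. A prime $a$ of degree $n$ is automatically coprime to $D$, because every prime factor of $D$ has degree $<M\le n$; hence $a\bmod D^2$ is a unit, and the admissible residues $C$ with $f(C)\equiv0\pmod{D^2}$ number exactly $\rho_f(D^2)$. Summing Weil's estimate \eqref{eq:4.1} over these $\rho_f(D^2)$ classes, and using $\deg D^2=2\deg D$, gives
\begin{equation}
\nonumber
\bigl|\{a\in\pi_q(n): f(a)\equiv0\pmod{D^2}\}\bigr|=\rho_f(D^2)\left(\frac{q^n}{n\,\phi(D^2)}+O\!\left(\frac{q^{n/2}}{n}\deg D\right)\right).
\end{equation}
Substituting back splits $|\Set'(n,M)|$ into a main term $\frac{q^n}{n}\sum_{D\mid Q}\frac{\mu(D)\rho_f(D^2)}{\phi(D^2)}$ and an error of size $O\!\big(\frac{q^{n/2}}{n}\sum_{D\mid Q}\rho_f(D^2)\deg D\big)$.

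For the main term I would use that $\rho_f$ and $\phi$ are multiplicative (again by the Chinese Remainder Theorem), so the sum factors as $\prod_{\deg P<M}\big(1-\frac{\rho_f(P^2)}{\phi(P^2)}\big)$; since $\phi(P^2)=|P|^2-|P|$, this is the truncation of the product defining $c_{f,2}$. To replace the truncation by the full product I would bound the tail: by Lemma \ref{bound-lem}, $\rho_f(P^2)\le\deg f$ once $\deg P\ge M$ exceeds an $f$-dependent constant (this is the meaning of $0\ll M$), so each tail factor is $1-O(|P|^{-2})$, and $\sum_{\deg P\ge M}|P|^{-2}\ll\sum_{d\ge M}\frac{1}{d q^{d}}\ll\frac{1}{Mq^M}$, using $|\pi_q(d)|\le q^d/d$ from Lemma \ref{Explicit Formula}. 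Hence the truncated product differs from $c_{f,2}$ by $O(1/(Mq^M))$, producing the term $c_{f,2}\frac{q^n}{n}+O\!\big(\frac{q^n}{nMq^M}\big)$.

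The main obstacle, and the step demanding care, is the error term, because $\sum_{D\mid Q}\rho_f(D^2)\deg D$ ranges over all $2^{\omega(Q)}$ square-free divisors and only the trivial bound $\rho_f(P^2)\le|P|^2=q^{2\deg P}$ is available for the many small primes. Pulling out $\deg D\le\deg Q$ and factoring the remaining sum, I would estimate $\sum_{D\mid Q}\rho_f(D^2)\le\prod_{\deg P<M}(1+q^{2\deg P})\le q^{2\deg Q+\omega(Q)}$. The Explicit Formula (Lemma \ref{Explicit Formula}) yields $\deg Q=\sum_{d<M}d\,|\pi_q(d)|<q^M$ and likewise $\omega(Q)<q^M$, so the whole error sum is at most $q^{M}\cdot q^{3q^M}\le q^{4q^M+M}$; multiplied by $q^{n/2}/n$ this gives the claimed $O\!\big(q^{\frac n2+4q^M+M}/n\big)$. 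Combining the two contributions completes the proof.
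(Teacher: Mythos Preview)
Your proof is correct and follows essentially the same sieve strategy as the paper: the paper sums directly over the ``good'' residue classes modulo $\prod_{\deg P<M}P^2$ and applies Weil's theorem to each progression, whereas you organize the same sieve dually via M\"obius inversion over the ``bad'' conditions $P^2\mid f(a)$. The main-term computation (factoring the truncated Euler product and bounding its tail via Lemma~\ref{bound-lem} and Lemma~\ref{Explicit Formula}) and the error bound (controlling $\deg Q$ and $\omega(Q)$ by $q^M$) are the same in both treatments.
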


The proof will use a standard sieve argument.  

\begin{proof}[Proof of Proposition \ref{prop:1.5}]
Denote $w=|\{P:\deg P<M\}|$ and enumerate $\{P:\deg P<M\}=\{P_j:1\leq j\leq w\}$.
Define:
\begin{equation}
\begin{split}
\nonumber
B&=\{(d_1,\dots,d_w) \in (\fq[t])^w:\forall j,1\leq j\leq w, \deg d_j < \deg P_j^2, 
\\
&f(d_j)\not\equiv 0 \pmod{P_j^2}\},
\\
C&=\{(d_1,\dots,d_w)\in B : \forall j,1\leq j\leq w, \gcd(d_j, P_j^2)=1\}.
\end{split}
\end{equation}

Now, for $a\in \pi_q(n)$, if $P_j|d_j$ for some $1\leq j\leq w$, then from $P_j|(a-d_j)$ it follows that $P_j|a$, but both are prime, thus $P_j=a$. Since $\deg a=n$ and $\deg P<M$, it follows that for $n\geq M$, if $P_j|d_j$, then $\{a\in \pi_q(n):\forall 1\leq j\leq w, a\equiv d_j \pmod{P_j^2}\}=\emptyset$.

In addition, according to the Chinese Remainder Theorem, for every set
$(d_1,\dots,d_w)\in B$, there is a unique element $d_{d_1,\dots,d_w}\in\fq[t]$ with $\deg d_{d_1,\dots,d_w} < \deg \prod_{\deg P<M}P^2$ such that $d_{d_1,\dots,d_w}\equiv d_j \pmod{P_j^2}, \forall 1\leq j\leq w$.

Thus for $M\leq n$ it holds that
\begin{equation}\label{eq:4.2}
\begin{split}
&|\Set'(n, M)|=|\{a\in \pi_q(n) : \forall \deg P< M, P^2\nmid f(a)\}|
\\
&=\sum_{\begin{array}{c}{(d_1,\dots,d_w)\in B}\end{array}}\left|\left\{a\in \pi_q(n):\forall j,1\leq j\leq w, a\equiv d_j \pmod{P_j^2}\right\}\right|
\\
&=\sum_{\begin{array}{c}{(d_1,\dots,d_w)\in C}\end{array}}\left|\left\{a\in \pi_q(n):\forall j,1\leq j\leq w, a\equiv d_j \pmod{P_j^2}\right\}\right|
\\
&=\sum_{\begin{array}{c}{(d_1,\dots,d_w)\in C}\end{array}}\left|\left\{a\in \pi_q(n):a\equiv d_{d_1,\dots,d_w} \pmod{\prod_{\deg P<M}P^2}\right\}\right|
\end{split}
\end{equation}

Now, using lemma \ref{Explicit Formula} we get
\begin{equation}\label{eq:4.3}
\begin{split}
\deg\prod_{\deg P<M}P^2&=\sum_{\deg P<M}2\deg P=2\sum_{i=1}^Mi|\pi_q(i)|
\\
 \leq2\sum_{i=1}^Mq^i&=2\left(q^M+\frac{q^M-1}{q-1}-1\right)\leq4q^M.
 \end{split}
\end{equation}

Since $\gcd(d_{d_1,\dots,d_w}, \prod_{\deg P<M}P^2)=1$, we can use \eqref{eq:4.1} with \eqref{eq:4.3} to get
\begin{equation}
\begin{split}
\nonumber
&|\{a\in \pi_q(n):a\equiv d_{d_1,\dots,d_w} \pmod{\prod_{\deg P<M}P^2}\}|
\\
&=\frac{q^n}{n\phi(\prod_{\deg P<M}P^2)}+O\left(\frac{q^{\frac{n}{2}}}{n}\deg\prod_{\deg P<M}P^2\right)
\\
&=\frac{q^n}{n\phi(\prod_{\deg P<M}P^2)}+O\left(\frac{q^{\frac{n}{2}}}{n}q^M\right)
\\
&=\frac{q^n}{n\phi(\prod_{\deg P<M}P^2)}+O\left(\frac{q^{\frac{n}{2}+M}}{n}\right).
\end{split}
\end{equation}

 Let us now insert this in equation \eqref{eq:4.2}:

\begin{equation}\label{eq:4.4}
\begin{split}
&|\Set'(n, M)|=\sum_{\begin{array}{c}{(d_1,\dots,d_w)\in C}\end{array}}\left(\frac{q^n}{n\phi(\prod_{\deg P<M}P^2)}+O\left(\frac{q^{\frac{n}{2}+M}}{n}\right)\right)
\\
&=\left(\prod_{\deg P<M}{\left(\phi\left(P^2\right)-\rho_f\left(P^2\right)\right)}\right)\left(\frac{q^n}{n\phi(\prod_{\deg P<M}P^2)}+O\left(\frac{q^{\frac{n}{2}+M}}{n}\right)\right)
\\
&=\frac{q^n}{n}\prod_{\deg P<M}{\left(1-\frac{\rho_f\left(P^2\right)}{|P|^2-|P|}\right)}+O\left(\frac{q^{\frac{n}{2}+M}}{n}\prod_{\deg P<M}|P|^2\right)
\\
&=\frac{q^n}{n}\prod_{\deg P<M}{\left(1-\frac{\rho_f\left(P^2\right)}{|P|^2-|P|}\right)}+O\left(\frac{q^{\frac{n}{2}+M}}{n}q^{4q^M}\right)
\\
&=\frac{q^n}{n}\prod_{\deg P<M}{\left(1-\frac{\rho_f\left(P^2\right)}{|P|^2-|P|}\right)}+O\left(\frac{q^{\frac{n}{2}+4q^M+M}}{n}\right).
\end{split}
\end{equation}

Now, using lemma \ref{bound-lem} we get
\begin{equation}
\begin{split}
\nonumber
\rho_f\left(P^2\right)&=|\{c\in\fq[t] : \deg c<\deg P^2, f(c)\equiv0 \pmod{P^2}, \gcd(c,P^2)=1\}|
\\
&\leq|\{c\in\fq[t] : \deg c<\deg P^2, f(c)\equiv0 \pmod{P^2}\}|=O(1),
\end{split}
\end{equation}
which is a uniform bound for all $P$ and the bound depends only on $f$ and the final field size $q$.
It follows that the infinite product \\ 
$c_{f,2}=\prod_{P}{\left(1-\frac{\rho_f\left(P^2\right)}{|P|^2-|P|}\right)}$ converges because $\sum_{P}\frac{1}{|P|^2-|P|}$ converges.

By \eqref{eq:4.4},
\begin{equation}\label{eq:4.5}
\begin{split}
|\Set'(n, M)|&=|\{a\in \pi_q(n) : \forall \deg P< M, P^2\nmid f(a)\}|
\\
&=\frac{q^n}{n}\prod_{\deg P<M}{\left(1-\frac{\rho_f\left(P^2\right)}{|P|^2-|P|}\right)}+O\left(\frac{q^{\frac{n}{2}+4q^M+M}}{n}\right)
\\
&=c_{f,2}\frac{q^n}{n}\prod_{\deg P\geq M}\frac{1}{1-\frac{\rho_f\left(P^2\right)}{|P|^2-|P|}}+O\left(\frac{q^{\frac{n}{2}+4q^M+M}}{n}\right).
\end{split}
\end{equation}

Now using the uniform bound $\rho_f\left(P^2\right)=O(1)$, the fact that $\log\frac{1}{1-x} \ll x$ for small enough $x>0$, and lemma \ref{Explicit Formula}, we get

\begin{align*}
\log\left(\prod_{\deg P\geq M}{\frac{1}{1-\frac{\rho_f\left(P^2\right)}{|P|^2-|P|}}}\right)&\ll\sum_{\deg P\geq M}{\frac{1}{|P|^2-|P|}}
\ll \sum_{\deg P\geq M}{\frac{1}{|P|^2}}
\\
&\ll\sum_{i\geq M}{\frac{|\pi_q(i)|}{q^{2i}}}\ll\sum_{i\geq M}{\frac{1}{iq^i}}\ll\frac{1}{Mq^M}.
\end{align*}

Now insert this back in \eqref{eq:4.5} and use the Taylor expansion for $e$ to get
\begin{equation}
\begin{split}
\nonumber
|\Set'(n, M)|&=|\{a\in \pi_q(n) : \forall \deg P< M, P^2\nmid f(a)\}|
\\
&=c_{f,2}\frac{q^n}{n}e^{O\left(\frac{1}{Mq^M}\right)}+O\left(\frac{q^{\frac{n}{2}+4q^M+M}}{n}\right)
\\
&=c_{f,2}\frac{q^n}{n}\left(1+O\left(\frac{1}{Mq^M}\right)\right)+O\left(\frac{q^{\frac{n}{2}+4q^M+M}}{n}\right)
\\
&=c_{f,2}\frac{q^n}{n}+O\left(\frac{q^n}{nMq^M}\right)+O\left(\frac{q^{\frac{n}{2}+4q^M+M}}{n}\right).
\end{split}
\end{equation}
\end{proof}

\section{Bounding the remainder}

I will prove:
\begin{prop}\label{prop:2}
For $0\ll M\leq\frac{n}{2}$:
\begin{equation}
\begin{split}
\nonumber
|\Set''(n, M)|&=|\{a\in \pi_q(n) : \exists P, \deg P\geq M, P^2\mid f(a)\}|
\\
&=O\left(\frac{q^n}{Mq^M}+q^{n\frac{p-1}{p}}\right)
\end{split}
\end{equation}
\end{prop}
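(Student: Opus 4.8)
The plan is to bound $|\Set''(n,M)|$ by counting, for each prime $P$ with $\deg P \geq M$, the number of primes $a \in \pi_q(n)$ for which $P^2 \mid f(a)$, and then summing over $P$. The key difficulty is that naively summing over all relevant $P$ is wasteful: a prime $P$ with $P^2 \mid f(a)$ must satisfy $2\deg P \leq \deg f(a) \ll n$, so $\deg P$ ranges up to order $n/2$, and the large primes in this range are where the trouble lies. Following Ramsay and Poonen, I would split the range of $\deg P$ into a ``low'' regime $M \leq \deg P \leq n/3$ (or some comparable cutoff) and a ``high'' regime $\deg P > n/3$, treating them by different methods.

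For the low regime, the plan is a direct counting estimate. For a fixed $P$ with $\deg P \geq M$, the values $a \in M_n(q)$ with $f(a) \equiv 0 \pmod{P^2}$ lie in at most $|\{b : \deg b < \deg P^2, f(b) \equiv 0 \pmod{P^2}\}|$ residue classes modulo $P^2$, and by Lemma \ref{bound-lem} the number of such classes is $O(1)$, uniformly in $P$. Each such residue class modulo $P^2$ contains at most $\lceil q^n / q^{2\deg P}\rceil \ll q^n/q^{2\deg P}$ monic polynomials of degree $n$ (using $2\deg P \leq n$), and in particular at most that many primes. Summing the bound $O(q^n/q^{2\deg P})$ over all $P$ with $\deg P \geq M$ gives, via Lemma \ref{Explicit Formula},
\begin{equation}
\nonumber
\sum_{\deg P \geq M} \frac{q^n}{q^{2\deg P}} \ll q^n \sum_{i \geq M} \frac{|\pi_q(i)|}{q^{2i}} \ll q^n \sum_{i \geq M} \frac{1}{i q^i} \ll \frac{q^n}{M q^M},
\end{equation}
which is the first term in the target bound. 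This handles the low regime cleanly, so the main obstacle is the high regime.

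For the high regime $\deg P > n/3$, the counting argument above breaks down because $q^n/q^{2\deg P}$ becomes too small to control the number of residue classes usefully per prime, yet there are too many such primes to sum trivially; worse, the per-prime root count is no longer negligible relative to the savings. The essential idea of Ramsay and Poonen is that if $P^2 \mid f(a)$ with $\deg P$ large, then $P$ is forced to divide a derivative-type quantity, and one exploits that over a field of characteristic $p$ the map $a \mapsto a^p$ (equivalently, that $f(a)$ together with $f'(a)$ constrains $a$) collapses the count. Concretely, I would argue that a large square factor $P^2$ of $f(a)$ propagates to a condition modulo $P$ on both $f(a)$ and its $x$-derivative, and the number of $a$ that can arise this way is governed by a quantity of size $O(q^{n(p-1)/p})$, reflecting the $p$-th power structure in characteristic $p$. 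This is precisely the source of the second error term $q^{n(p-1)/p}$ and is the step I expect to be genuinely delicate, since it requires translating the square-divisibility condition into a statement that can be counted using the Frobenius and the bounded geometry of $f$. Combining the two regimes yields
\begin{equation}
\nonumber
|\Set''(n,M)| \leq \frac{2 q^n \deg f}{M q^M} + O\!\left(q^{n\frac{p-1}{p}}\right) = O\!\left(\frac{q^n}{M q^M} + q^{n\frac{p-1}{p}}\right),
\end{equation}
as claimed.
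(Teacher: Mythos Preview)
Your low-regime argument is correct and matches the paper's Proposition~\ref{prop:3} essentially verbatim (the paper uses the cutoff $n/2$ rather than $n/3$; since $2\deg P\le n$ suffices for the residue-class count $q^n/|P|^2$ to hold exactly, $n/2$ is the natural choice and nothing is gained by $n/3$).

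The gap is in the high regime. You write that $P^2\mid f(a)$ forces $P$ to divide ``both $f(a)$ and its $x$-derivative.'' That is not true: $P^2\mid f(a)$ gives $P\mid \frac{\d}{\d t}\bigl(f(a)\bigr)$, the $t$-derivative, and by the chain rule this equals $f_x(a)\,a'(t)+f_t(a)$, which involves $a'(t)$ and is therefore not a polynomial condition in $a$ alone. There is no mechanism forcing $P\mid f_x(a)$. The paper's device (following Poonen) is to eliminate the unwanted $a'(t)$ term by first parametrising $a=\sum_{j=0}^{p-1} t^j a_j(t)^p$ with $\deg a_j\le \lfloor n/p\rfloor$, and setting $F(y_0,\dots,y_{p-1})=f\bigl(\sum_j t^j y_j^p\bigr)$. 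Because $F$ depends on the $y_j$ only through $y_j^p$, every $\partial F/\partial y_j$ vanishes identically, so the total $t$-derivative collapses to $\frac{\d}{\d t}F(a_0,\dots,a_{p-1})=\frac{\partial F}{\partial t}(a_0,\dots,a_{p-1})$. Now $P^2\mid f(a)$ genuinely implies $P\mid F(\underline a)$ and $P\mid (\partial F/\partial t)(\underline a)$, two \emph{fixed} polynomials in the $a_j$; after checking they are coprime in $\fq(t)[y_0,\dots,y_{p-1}]$ (the degenerate case forces $c_{f,2}=0$), an induction on the number of variables bounds the count by $O\bigl(q^{n(p-1)/p}\bigr)$, since each $a_j$ ranges over $O(q^{n/p})$ values. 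Your intuition that the exponent $(p-1)/p$ reflects a $p$-th power structure is exactly right, but the structure is imposed on the \emph{argument} $a$, not extracted from $f$ or its $x$-derivative.
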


In \cite{Ramsay}, Ramsay stated a bound for $|\Set''(n, M)|$. However, his argument only works for the case where $f\in \fq[x]$ has constant coefficients, and even in that case the argument is incomplete. A proof for the general case is given in Poonen \cite{Poonen}. Poonen gave an alternative proof that a multi-variable version of this set has density 0. In his proof he reduces the problem to the calculation of the density of
$$\{a\in\fq[t] : \deg a=n, \exists P, \deg P\geq M, P\mid h(a),g(a)\}$$
for $h(x),g(x)\in\fq[t][x]$ which are coprime as elements of $\fq(t)[x]$. He calculates this density in a more general case (Lemma 5.1 in \cite{Poonen}). We will apply some of his ideas to the special case which is needed for the proof in the present paper.

As it is standard in the problem of counting square-free values of polynomials, we
bound the size of $P''(n,M)$ by separating the cases $\deg P > n/2$ and $n/2 \geq \deg P > M$ (this was
suggested to us by Rudnik). We use Poonen's proof method in \cite{Poonen}. All the results will have explicit error terms.

Proposition \ref{prop:2} will be proven by the following two propositions:

\begin{prop}\label{prop:3}
For $0\ll M\leq\frac{n}{2}$,
$$\left|\left\{a\in M_n(q) : \exists P, \frac{n}{2}\geq\deg P\geq M, P^2\mid f(a)\right\}\right|\ll\frac{q^n}{Mq^M}.$$
\end{prop}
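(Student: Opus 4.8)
The plan is to bound the number of monic polynomials $a$ of degree $n$ for which $f(a)$ is divisible by $P^2$ for some prime $P$ in the medium range $M \leq \deg P \leq n/2$, by summing over $P$ the count of such $a$ and then estimating that inner count via a counting-of-roots argument modulo $P^2$. First I would write
$$\left|\left\{a\in M_n(q) : \exists P, \tfrac{n}{2}\geq\deg P\geq M, P^2\mid f(a)\right\}\right| \leq \sum_{M\leq \deg P\leq n/2} |\{a\in M_n(q) : P^2\mid f(a)\}|,$$
trading the existential quantifier for a union bound; since $\deg P \leq n/2$, we have $\deg P^2 \leq n$, so the congruence condition $f(a)\equiv 0 \pmod{P^2}$ is meaningful and the residue $a \bmod P^2$ is not forced.

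The key step is to count $\{a\in M_n(q): P^2\mid f(a)\}$. Each residue class $b \bmod P^2$ with $f(b)\equiv 0 \pmod{P^2}$ contains at most $q^{n - \deg P^2} + 1 = q^{n-2\deg P}+1$ monic polynomials of degree $n$ (the number of $a$ of degree $n$ in a fixed class modulo a modulus of degree $2\deg P \leq n$). By Lemma \ref{bound-lem}, the number of such residues $b$ is $O(1)$, uniformly in $P$. Hence $|\{a\in M_n(q): P^2\mid f(a)\}| \ll q^{n-2\deg P}$ once $\deg P \leq n/2$, where the implied constant depends only on $f$ and $q$. Substituting and writing the sum over primes as a sum over degrees $i = \deg P$ with $|\pi_q(i)| \ll q^i/i$ gives
\begin{align*}
\sum_{M\leq \deg P\leq n/2} q^{n-2\deg P} &\ll \sum_{i=M}^{n/2} |\pi_q(i)|\, q^{n-2i} \ll \sum_{i\geq M} \frac{q^i}{i}\, q^{n-2i} = q^n\sum_{i\geq M}\frac{1}{i\,q^{i}} \ll \frac{q^n}{Mq^M},
\end{align*}
using the same tail estimate $\sum_{i\geq M} 1/(iq^i) \ll 1/(Mq^M)$ already invoked in the proof of Proposition \ref{prop:1.5}. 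This yields the claimed bound.

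The main subtlety to handle carefully is the per-class count of monic degree-$n$ polynomials in a fixed residue class modulo $P^2$: when $2\deg P = n$ exactly, a class may contain $O(1)$ rather than $\Theta(q^{n-2\deg P})$ elements, but this is harmlessly absorbed into the implied constant, and for $2\deg P < n$ the count is precisely $q^{n-2\deg P}$ up to the monic normalization. The one genuine obstacle is ensuring the uniformity of the Lemma \ref{bound-lem} bound across all $P$ in the range, including the finitely many small primes with $\deg P \leq \max\{\deg\Delta(f),\deg w_f\}$ where the root count is bounded by $|P|^2$ rather than $\deg f$; these contribute only finitely many terms and so do not affect the asymptotic order. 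Since Lemma \ref{bound-lem} already supplies a single constant valid for every $P$, the sum goes through uniformly, and the complementary large-prime range $\deg P > n/2$ will be treated separately in the next proposition to complete Proposition \ref{prop:2}.
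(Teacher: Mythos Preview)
Your proposal is correct and follows essentially the same approach as the paper: a union bound over primes $P$ in the range, the bound $O(1)$ on residues modulo $P^2$ from Lemma~\ref{bound-lem}, the count of monic degree-$n$ polynomials in each residue class, and the tail sum $\sum_{i\geq M}1/(iq^i)\ll 1/(Mq^M)$ via $|\pi_q(i)|\leq q^i/i$. The only cosmetic difference is that the paper uses the exact count $|\{a\in M_n(q):a\equiv C\pmod{P^2}\}|=q^n/|P|^2$ rather than your (harmless) upper bound $q^{n-2\deg P}+1$.
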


\begin{prop}\label{prop:4}
$$\left|\left\{a\in M_n(q) : \exists P, \deg P>\frac{n}{2}, P^2\mid f(a)\right\}\right|\ll q^{n\frac{p-1}{p}}.$$
\end{prop}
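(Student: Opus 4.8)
The plan is to use the derivative $\tfrac{\d}{\d t}$ to turn square-divisibility into divisibility by a single resultant, and then to count the primes that can occur, exploiting that in characteristic $p$ the derivative map has a large kernel. Throughout set $F_a(t):=f(a(t))\in\fq[t]$, so that $\deg_t F_a\le (\deg f)\,n+O_f(1)$. First I would record that if $P^2\mid F_a$ then $P\mid F_a'$, where $F_a'=\tfrac{\d}{\d t}F_a=\tfrac{\partial f}{\partial t}(t,a)+\tfrac{\partial f}{\partial x}(t,a)\,a'$; indeed $F_a=P^2H$ gives $F_a'=P(2P'H+PH')$. Hence $x\equiv a(t)$ is a common root modulo $P$ of $f(t,x)$ and $\tfrac{\partial f}{\partial t}(t,x)+a'(t)\tfrac{\partial f}{\partial x}(t,x)$, so their resultant in $x$ vanishes modulo $P$. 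Introducing the fixed polynomial
$$\mathfrak R(t,y):=\rest_x\!\left(f(t,x),\ \tfrac{\partial f}{\partial t}(t,x)+y\,\tfrac{\partial f}{\partial x}(t,x)\right)\in\fq[t,y],$$
this says $P\mid \mathfrak R(t,a'(t))$ in $\fq[t]$, provided $\deg P$ is large enough that reduction mod $P$ preserves the relevant $x$-degrees. Because $f$ is square-free, its roots $\theta_i$ over $\overline{\fq(t)}$ are simple, so $\tfrac{\partial f}{\partial x}(t,\theta_i)\ne0$ and $\mathfrak R(t,y)=c(t)\prod_i\big(\tfrac{\partial f}{\partial t}(t,\theta_i)+y\,\tfrac{\partial f}{\partial x}(t,\theta_i)\big)$ is a nonzero polynomial of degree $\deg f$ in $y$, with $\deg_t\mathfrak R=O_f(1)$.

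The decisive idea is to count the primes that can arise rather than the polynomials $a$. Call a prime $P$ with $\deg P>n/2$ \emph{relevant} if $P^2\mid F_a$ for some $a\in M_n(q)$; for such $a,P$ we have $P\mid\mathfrak R(t,a'(t))$, and I split according to whether $\mathfrak R(t,a'(t))=0$. If $\mathfrak R(t,a'(t))\ne0$, then $P$ divides a nonzero polynomial of $t$-degree at most $\deg_t\mathfrak R+(\deg f)(n-1)=O_f(n)$, and such a polynomial has at most $O_f(1)$ prime factors of degree $>n/2$. Thus every relevant prime of this type divides $\mathfrak R(t,r)$ for some $r$ in the image of $a\mapsto a'$ on $M_n(q)$, and each $r$ supplies only $O_f(1)$ relevant primes. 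Since this $\fq$-linear map on polynomials of degree $\le n$ has kernel $\fq[t^p]$, of $\fq$-dimension $\lfloor n/p\rfloor+1$, its image has size at most $q^{\,n-\lfloor n/p\rfloor+1}\ll q^{\,n(p-1)/p}$, so there are $\ll q^{\,n(p-1)/p}$ relevant primes of this type. Finally, for a fixed $P$ every bad $a\in M_n(q)$ has $a\bmod P^2$ equal to a root of $f$ modulo $P^2$, of which there are at most $\deg f$ by Lemma \ref{bound-lem}, and $\deg a=n<\deg P^2$ forces $a$ to be the unique representative of its class; so each relevant $P$ accounts for at most $\deg f$ polynomials $a$. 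Multiplying, the contribution of these $a$ to Proposition \ref{prop:4} is $\ll q^{\,n(p-1)/p}$.

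It remains to treat the inseparable locus $\mathfrak R(t,a'(t))=0$. Here $a'$ is a root of the fixed nonzero polynomial $\mathfrak R(t,y)$ over $\fq(t)$, hence equals one of the $\le\deg f$ algebraic functions $\theta_i'$; only those $\theta_i'$ lying in $\fq[t]$ can occur, and for each such value $r$ the set $\{a\in M_n(q):a'=r\}$ is a coset of $\fq[t^p]$, of size at most $q^{\lfloor n/p\rfloor+1}\ll q^{\,n(p-1)/p}$. So this locus also contributes $\ll q^{\,n(p-1)/p}$, and adding the two bounds proves the proposition.

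The step I expect to be the main obstacle is the bookkeeping that makes the resultant reduction legitimate and uniform: one must check that for all primes in play ($\deg P>n/2$, hence $\deg P>\max\{\deg\Delta(f),\deg w_f\}$ once $n$ is large, cf.\ Remark \ref{Discriminant-remark}) reduction modulo $P$ does not drop the $x$-degrees of $f$ or of $\tfrac{\partial f}{\partial t}+y\tfrac{\partial f}{\partial x}$, so that $\rest_x$ genuinely commutes with reduction and $P\mid\mathfrak R(t,a'(t))$ is valid, and one must confirm $\mathfrak R(t,y)\not\equiv0$, which is exactly where square-freeness of $f$ enters. The characteristic-$p$ input — that the image of $\tfrac{\d}{\d t}$ on degree-$\le n$ polynomials has size only $q^{\,n(p-1)/p}$ — is what produces the exponent $\tfrac{p-1}{p}$ and pinpoints the inseparable-type $a$ as the reason the bound is not smaller.
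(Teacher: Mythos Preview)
Your approach is genuinely different from the paper's and, modulo one gap, works. The paper writes each $a\in M_n(q)$ uniquely as $\sum_{j=0}^{p-1}t^j a_j^p$ with $\deg a_j\le\lfloor n/p\rfloor$, sets $F(y_0,\dots,y_{p-1})=f(\sum t^j y_j^p)$, observes that $P^2\mid F(a_0,\dots)\Rightarrow P\mid F$ and $P\mid\partial F/\partial t$ (all $\partial F/\partial y_j$ vanish since $F\in\fq[t][y_0^p,\dots,y_{p-1}^p]$), and then runs a multi-variable induction eliminating one $y_j$ at a time via a B\'ezout step. Your route collapses this to a one-variable resultant $\mathfrak R(t,y)$ together with the size of the image of $a\mapsto a'$; it is shorter and avoids the induction, at the price of a more delicate verification that $\mathfrak R\not\equiv0$.

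That verification is where your argument breaks. You write ``because $f$ is square-free, its roots $\theta_i$ over $\overline{\fq(t)}$ are simple, so $\partial f/\partial x(t,\theta_i)\ne0$''. This is false in positive characteristic: square-freeness in $\fq[t][x]$ does not force separability over $\fq(t)$. Take $f=x^p-t$: it is irreducible (Eisenstein at $t$), hence square-free, yet over $\overline{\fq(t)}$ it factors as $(x-t^{1/p})^p$ and $\partial f/\partial x\equiv0$. In this example $\mathfrak R(t,y)=\rest_x(x^p-t,-1)=(-1)^p\ne0$, so the conclusion survives, but your stated reason does not establish it, and without $\mathfrak R\not\equiv0$ both branches of your dichotomy collapse.

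The claim $\mathfrak R\not\equiv0$ is nevertheless true for every square-free $f$, and can be rescued as follows. If $\mathfrak R\equiv0$ there is $\theta\in\overline{\fq(t)}$ with $f(\theta)=0$ and $\partial_t f(\theta)+y\,\partial_x f(\theta)=0$; transcendence of $y$ forces $\partial_t f(\theta)=\partial_x f(\theta)=0$. Letting $f_1\in\fq(t)[x]$ be the minimal polynomial of $\theta$ and using $\gcd(f_1,f/f_1)=1$, one gets $\partial_x f_1(\theta)=\partial_t f_1(\theta)=0$. The first gives $\partial_x f_1\equiv0$, so $f_1\in\fq(t)[x^p]$; the second forces all coefficient ratios of $f_1$ to lie in $\fq(t^p)$, whence (by perfectness of $\fq$) $f_1$ is a unit times a $p$-th power in $\fq(t)[x]$, contradicting irreducibility. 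The paper sidesteps this computation entirely: in its Lemma~\ref{lem:9} it shows that if $F$ and $\partial F/\partial t$ are not coprime then already $c_{f,2}=0$ and $|\Set_{f,2}(n)|=0$, so Theorem~\ref{main thm} holds trivially and the bound in Proposition~\ref{prop:4} is not needed.
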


The bound in Proposition \ref{prop:2} is achieved by bounding the sets above which go over non prime polynomials as well as prime polynomials. Those sets, as it turns out, are easier to estimate. In the case of $\Z$ this technique doesn't work for the parallel of Proposition \ref{prop:4} because the bigger set is also hard to bound.

\section{Proof of Proposition \ref{prop:3}}

\begin{proof}

Notice that if $\deg P\leq\frac{n}{2}$, then for any $C\in\fq[t]$ we have
$$|\{a\in M_n(q) : a\equiv C \pmod{P^2}\}|=\frac{q^n}{|P|^2}.$$
Denote $x(D)=|\{C\in\fq[t] : \deg C<\deg D, f(C)\equiv0 \pmod{D}\}|$. Using lemma \ref{bound-lem} and lemma \ref{Explicit Formula} we get

\begin{align*}
&\left|\left\{a\in M_n(q) : \exists P, M\leq\deg P\leq\frac{n}{2}, P^2\mid f(a)\right\}\right|
\\
&\leq\sum_{M\leq\deg P\leq\frac{n}{2}}|\{a\in M_n(q) : P^2\mid f(a)\}|
\\
&=\sum_{M\leq\deg P\leq\frac{n}{2}}\sum_{
\tiny\begin{array}{c}C\in\fq[t], \deg C <\deg P^2\\f(C)\equiv0\!\!\!\!\pmod{P^2}\end{array}}|\{a\in M_n(q) : a\equiv C\!\!\!\!\pmod{P^2}\}|
\\
&=q^n\sum_{M\leq\deg P\leq\frac{n}{2}}\frac{x\left(P^2\right)}{|P|^2}
\ll q^n\sum_{M\leq\deg P\leq\frac{n}{2}}\frac{1}{|P|^2}
\\
&=q^n\sum_{M\leq k\leq\frac{n}{2}}\frac{|\pi_q(k)|}{q^{2k}}
\leq q^n\sum_{M\leq k\leq\frac{n}{2}}\frac{\frac{q^k}{k}}{q^{2k}}
\\
&=q^n\sum_{M\leq k\leq\frac{n}{2}}\frac{1}{kq^k}
\leq\frac{q^n}{Mq^M}\sum_{M\leq k\leq\frac{n}{2}}\frac{1}{2^k}\leq\frac{q^n}{Mq^M}.
\end{align*}
\end{proof}

Note 1: Poonen in \cite{Poonen} estimated this set (in his paper it is
$|\bigcup_{s=0}^{N-1}Q_s|$) using dimension considerations from
algebraic geometry (see the proof of Lemma 5.1 in \cite{Poonen}).

Note 2: Similar proof for the integer case can be found in \cite{Granville}.

\section{Proof of Proposition \ref{prop:4}}

Denote
\begin{equation}
\begin{split}
\nonumber
&F(y_0,\dots,y_{p-1})=f\left(\sum_{j=0}^{p-1}{t^jy_j^p}\right)\in\fq[t][y_0,\dots,y_{p-1}],
\\
&Q=\left\{a\in \fq[t]^p : \forall 0\leq i<p, \deg a_i\leq\floor{\frac{n}{p}} \textnormal{and } \exists P, \deg P>\frac{n}{2}, P\mid F(a),\frac{\partial F}{\partial t}(a)\right\}.
\end{split}
\end{equation}

Proposition \ref{prop:4} follows from the following two lemmas:

\begin{lem}\label{lem:8}
$$\left|\left\{a\in M_n(q) : \exists P, \deg P>\frac{n}{2}, P^2\mid f(a)\right\}\right|\leq|Q|$$
\end{lem}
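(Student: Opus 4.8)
The plan is to exploit that $\fq$ is a perfect field in order to linearize the Frobenius and reinterpret the one-variable divisibility condition $P^2\mid f(a)$ as a pair of conditions on the $p$-tuple of Frobenius components of $a$, thereby setting up an injection from the left-hand set into $Q$.

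First I would record the decomposition underlying the definition of $F$. Since the Frobenius $x\mapsto x^p$ is a bijection of $\fq$, grouping the monomials of any $a\in\fq[t]$ according to the residue of their exponent modulo $p$ lets me write $a$ uniquely as $a=\sum_{j=0}^{p-1}t^j b_j^p$ with $b_j\in\fq[t]$; this is a bijection $\fq[t]\to\fq[t]^p$, $a\mapsto(b_0,\dots,b_{p-1})$, and by construction $f(a)=F(b_0,\dots,b_{p-1})$. The degrees $j+p\deg b_j$ lie in distinct residue classes modulo $p$, so there is no cancellation among the summands $t^j b_j^p$ and $\deg a=\max_j(j+p\deg b_j)$; in particular $\deg a=n$ forces $j+p\deg b_j\le n$, hence $\deg b_j\le\floor{\frac{n}{p}}$ for every $j$. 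Thus the map sends each $a$ in the left-hand set to a tuple satisfying the degree constraint defining $Q$, and since the map is injective it suffices to verify that each such tuple also satisfies the two divisibility conditions in $Q$ for the same prime $P$.

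The condition $P\mid F(b_0,\dots,b_{p-1})$ is immediate, since $F(b_0,\dots,b_{p-1})=f(a)$ and $P^2\mid f(a)$. The heart of the argument is the second condition, $P\mid\frac{\partial F}{\partial t}(b_0,\dots,b_{p-1})$. Here I would use the identity $\frac{\partial F}{\partial t}(b_0,\dots,b_{p-1})=\frac{d}{dt}\bigl(f(a)\bigr)$, the formal $t$-derivative of $f(a)\in\fq[t]$. This holds via the multivariate chain rule: writing $g=\sum_j t^j y_j^p$, each partial $\frac{\partial F}{\partial y_j}=\frac{\partial f}{\partial x}(g)\cdot t^j\, p\, y_j^{p-1}$ vanishes identically in characteristic $p$, so upon substituting $a=\sum_j t^j b_j^p$ the total derivative $\frac{d}{dt}f(a)$ collapses to the explicit $t$-partial $\frac{\partial F}{\partial t}$ evaluated at the tuple. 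Finally, writing $f(a)=P^2 u$ and differentiating gives $\frac{d}{dt}f(a)=P\bigl(2P'u+Pu'\bigr)$, so $P\mid\frac{d}{dt}f(a)=\frac{\partial F}{\partial t}(b_0,\dots,b_{p-1})$. This places the tuple in $Q$ and, combined with injectivity, yields the claimed inequality.

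I expect the main obstacle to be handling the derivative conventions correctly: making precise that $\frac{\partial F}{\partial t}$ denotes differentiation of the explicit occurrences of $t$ with the $y_j$ held fixed, and verifying cleanly that the $y_j$-partials drop out so that this explicit partial agrees with the genuine $t$-derivative of $f(a)$ after substitution. The perfectness of $\fq$ (used for the unique Frobenius decomposition) and the degree accounting are routine by comparison.
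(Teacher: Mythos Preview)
Your proposal is correct and follows essentially the same route as the paper: the paper packages the Frobenius decomposition as a separate Lemma~\ref{lem:7}, then proves Lemma~\ref{lem:8} by the same chain-rule computation showing $\frac{d}{dt}F(b_0,\dots,b_{p-1})=\frac{\partial F}{\partial t}(b_0,\dots,b_{p-1})$ since the $y_j$-partials vanish, and likewise passes from $P^2\mid F(b)$ to $P\mid F(b)$ and $P\mid \frac{d}{dt}F(b)$. Your version is slightly more self-contained in that you inline the decomposition and make explicit the product-rule step $f(a)=P^2u\Rightarrow \frac{d}{dt}f(a)=P(2P'u+Pu')$, which the paper leaves implicit.
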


\begin{lem}\label{lem:9}
We have that at least one of the following holds:

\begin{itemize}
\item $c_{f,2} = 0$ and $P_{f,2} (n) = 0$ (in which case Theorem 1 holds), or
\item $|Q| \ll q^{n\frac{p-1}{p}}$.
\end{itemize}
\end{lem}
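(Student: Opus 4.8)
The plan is to resolve the dichotomy according to whether $F$ and $\partial_t F:=\frac{\partial F}{\partial t}$ are coprime in $\fq(t)[y_0,\dots,y_{p-1}]$. If $\deg_x f=0$ then $f(P)$ is a fixed square-free polynomial for every prime $P$, so $Q=\emptyset$ and the second alternative holds trivially; thus assume $\deg_x f\ge 1$. The algebraic heart of the matter is that \emph{for square-free $f$ the polynomials $F$ and $\partial_t F$ are coprime}. To see this, write $u=\sum_{j}t^jy_j^p$, so $F=f(u)$ and $\partial_t F=\frac{\partial f}{\partial x}(u)\,\partial_t u+\frac{\partial f}{\partial t}(u)$ with $\partial_t u=\sum_{j\ge1}jt^{j-1}y_j^p$. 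In the coordinates $(u,y_1^p,\dots,y_{p-1}^p)$ a common irreducible factor must divide $f(u)$, hence equal an irreducible factor $\pi(u)$ of $f$; since the coefficient of $y_1^p$ in $\partial_t u$ is $1$, divisibility of $\partial_t F$ by $\pi$ forces $\pi\mid\frac{\partial f}{\partial x}$ and $\pi\mid\frac{\partial f}{\partial t}$, which for square-free $f$ makes $\pi$ a $p$-th power, contradicting irreducibility. I would show that the only way this coprimality can break down is a local degeneracy, in which case a direct check yields a fixed prime $P$ with $P^2\mid f(C)$ for every $C$ coprime to $P$; then $f(P)$ is never square-free once $\deg P$ is large, so $c_{f,2}=0$ and $|\Set_{f,2}(n)|=0$, which is the first alternative.

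In the coprime case set $R:=\rest_{y_0}(F,\partial_t F)\in\fq[t][y_1,\dots,y_{p-1}]$. Since $\deg_{y_0}F=p\deg_x f>0$ with leading coefficient $w_f(t)\ne0$ and $F,\partial_t F$ are coprime, $R$ is a nonzero polynomial and the resultant identity $R=A\,F+B\,\partial_t F$ holds with $A,B\in\fq[t][y_0,\dots,y_{p-1}]$. For $a=(a_0,\dots,a_{p-1})\in Q$ with witnessing prime $P$, $\deg P>n/2$, substituting $y_j=a_j$ gives $P\mid R(a_1,\dots,a_{p-1})$; moreover, as every prime of $\fq[t]$ is separable, $P\mid F(a)$ together with $P\mid\partial_t F(a)=\bigl(f(a)\bigr)'$ yields $P^2\mid F(a)$.

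The counting then fixes $(a_1,\dots,a_{p-1})$ and $P$ and bounds the admissible $a_0$. Writing $a=a_0^p+c$ with $c=\sum_{j\ge1}t^ja_j^p$, the condition $P^2\mid f(a)$ restricts $a\bmod P^2$ to one of the $O(1)$ roots of $f$ modulo $P^2$ (Lemma~\ref{bound-lem}), and each such root determines $a_0\bmod P$, hence $a_0$ itself since $\deg a_0\le\lfloor n/p\rfloor<\deg P$; so there are $O(1)$ choices of $a_0$. Therefore
$$|Q|\ll\sum_{\substack{(a_1,\dots,a_{p-1})\\ \deg a_i\le\lfloor n/p\rfloor}}\#\{P:\deg P>n/2,\ P\mid R(a_1,\dots,a_{p-1})\}.$$
If $R(a_1,\dots,a_{p-1})\ne0$, then $\deg_t R(a_1,\dots,a_{p-1})=O(n)$ forces at most $O(n)/(n/2)=O(1)$ prime factors of degree $>n/2$, and there are at most $q^{(p-1)(\lfloor n/p\rfloor+1)}\ll q^{n(p-1)/p}$ such tuples. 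If $R(a_1,\dots,a_{p-1})=0$, then by the Schwartz--Zippel bound over $\fq(t)$ there are only $\ll q^{(p-2)(\lfloor n/p\rfloor+1)}$ such tuples, each allowing at most $q^{\lfloor n/p\rfloor+1}$ values of $a_0$, again $\ll q^{n(p-1)/p}$. Adding the two contributions gives $|Q|\ll q^{n\frac{p-1}{p}}$.

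The main obstacle is the interchange-of-summation step. Summing the per-prime counts directly is fatal, because individually they are of size $\asymp q^{(p-1)\deg P}$ and the primes of degree $>n/2$ proliferate; the savings must come globally. This is exactly where the resultant is indispensable: it collapses the two congruences $P\mid F(a),\ P\mid\partial_t F(a)$ into a single divisibility by a polynomial $R$ of controlled degree, so that summing over the $(p-1)$-tuples $(a_1,\dots,a_{p-1})$ and using that one bounded-degree value has only $O(1)$ large prime factors produces the gain of $q^{n/p}$ coming from the drop from $p$ to $p-1$ free variables (the $a_0$-multiplicity having been absorbed into the $O(1)$ via Lemma~\ref{bound-lem}). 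The delicate input underlying everything is the coprimality of $F$ and $\partial_t F$ and the verification that its failure lands precisely in the degenerate regime $c_{f,2}=0$.
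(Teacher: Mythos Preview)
Your counting argument in the coprime case is correct and takes a more direct route than the paper. The paper factors $F=\prod f_i$ and $\partial_tF=\prod g_j$ into irreducibles and applies Proposition~\ref{prop:6} to each pair $(f_i,g_j)$; that proposition is then proved by a double induction (on the number of variables $N$ and on $\deg_{x_N}f$), using a B\'ezout elimination at each step together with Propositions~\ref{prop:7}--\ref{prop:8}. You instead take a single resultant $R=\rest_{y_0}(F,\partial_tF)$, use the implication $P\mid F(a),\,\partial_tF(a)\Rightarrow P^2\mid f(\alpha)$ (via separability of $P$) and Lemma~\ref{bound-lem} to pin down $a_0$, and handle the zero locus of $R$ by Schwartz--Zippel. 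This is cleaner for the specific case $N=p$ at hand, at the cost of not producing the general Proposition~\ref{prop:6}.

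The weak point is your handling of the dichotomy. The assertion that a common irreducible factor of $F$ and $\partial_tF$ in $\fq(t)[y_0,\dots,y_{p-1}]$ ``must equal an irreducible factor $\pi(u)$ of $f$'' is not justified: your change of coordinates $(u,y_1^p,\dots,y_{p-1}^p)$ is only an automorphism of the subring $\fq(t)[y_0^p,\dots,y_{p-1}^p]$, and an irreducible divisor in the full ring need not lie there. (One can repair this by proving coprimality in the subring and then invoking that $\fq(t)[y_0,\dots,y_{p-1}]$ is integral over it, so a height-one common prime would contract to a height-one common prime below; but you have not done so.) Your fallback sentence about ``local degeneracy'' is also only a plan. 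The paper avoids the issue entirely: it does not attempt to prove coprimality, but shows that if $G=\gcd(F,\partial_tF)$ has positive degree then for every $a$ one has $P\mid G(a)\Rightarrow P\mid F(a)$ and $P\mid \tfrac{d}{dt}F(a)=\partial_tF(a)$, hence $P^2\mid F(a)$, so no $f(\alpha)$ is square-free; then the known density formula for square-free values over all monic polynomials (Poonen's Theorem~3.4) forces some factor in $c_{f,2}$ to vanish, giving the first alternative directly.
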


Lemma \ref{lem:8} will be proved using the following lemma:

\begin{lem}\label{lem:7}
Denote by $b$ the unique integer such that $b\equiv n \pmod{p}$ and $0\leq b<p$.

For each $n$ there is a set $A_n\subset (\fq[t])^p$ such that $(1)$
$$M_n(q)=\left\{\sum_{j=0}^{p-1}{t^ja_j(t)^p} : (a_0(t),\dots,a_{p-1}(t))\in A_n\right\}$$
and $(2)$
$$\forall (a_0(t),\dots,a_{p-1}(t))\neq (b_0(t),\dots,b_{p-1}(t))\in A_n, \quad\sum_{j=0}^{p-1}{t^ja_j(t)^p}\neq \sum_{j=0}^{p-1}{t^jb_j(t)^p}$$

Moreover, $a_b(t)\in M_{\floor{\frac{n}{p}}}(q)$ and for all $a \in A_n$ we have that $\deg a_j(t) \leq \lfloor n/p \rfloor$ for each $j$.
\end{lem}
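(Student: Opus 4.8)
The plan is to exploit the characteristic-$p$ structure of $\fq[t]$. Since $q=p^m$, the field $\fq$ is perfect, so the Frobenius $c\mapsto c^p$ is a bijection of $\fq$; write $c\mapsto c^{1/p}$ for its inverse. The key observation is that every $a\in\fq[t]$ has a unique decomposition $a=\sum_{j=0}^{p-1}t^j a_j(t)^p$ obtained by sorting its monomials according to the residue of the exponent modulo $p$, and I will take $A_n$ to be the set of tuples arising in this way from $a\in M_n(q)$.

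Concretely, I would write $a=\sum_k c_k t^k$ and group the monomials by the residue of $k$ modulo $p$. Using the identity $(\sum_j b_j t^j)^p=\sum_j b_j^p t^{pj}$, valid in characteristic $p$, one obtains
$$a(t)=\sum_{r=0}^{p-1}t^r\left(\sum_j c_{pj+r}^{1/p}\,t^j\right)^p,$$
so setting $a_r(t)=\sum_j c_{pj+r}^{1/p}t^j$ yields the representation $a=\sum_{r=0}^{p-1}t^r a_r(t)^p$. For the injectivity in (2), I would note that every monomial occurring in $t^r a_r(t)^p$ has exponent congruent to $r$ modulo $p$, so the $p$ summands occupy disjoint residue classes; hence an equality $\sum_r t^r a_r^p=\sum_r t^r b_r^p$ forces $a_r^p=b_r^p$ and therefore $a_r=b_r$ for each $r$, since the $p$-th power map is injective on $\fq[t]$. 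This shows that $(a_0,\dots,a_{p-1})\mapsto\sum_r t^r a_r^p$ is a bijection $(\fq[t])^p\to\fq[t]$, and defining $A_n$ as the preimage of $M_n(q)$ immediately gives (1) and (2).

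It remains to verify the degree statements. Since $c_{pj+r}=0$ whenever $pj+r>n$, each coordinate satisfies $\deg a_r\le\floor{(n-r)/p}\le\floor{n/p}$, which is the asserted uniform bound. For the distinguished index $b\equiv n\pmod p$, write $n=p\floor{n/p}+b$; then the leading coefficient $c_n$ of $a$ contributes the term $c_n^{1/p}t^{\floor{n/p}}$ to $a_b$, and no higher power of $t$ appears in $a_b$ because $pj+b>n$ forces $c_{pj+b}=0$. Hence $\deg a_b=\floor{n/p}$, and if $a$ is monic then $c_n=1$, so $c_n^{1/p}=1$ and $a_b$ is monic, i.e. $a_b\in M_{\floor{n/p}}(q)$.

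The argument is essentially bookkeeping once the characteristic-$p$ viewpoint is adopted; the only point requiring care is the disjointness of residue classes modulo $p$, which simultaneously delivers the injectivity in (2) and the clean degree bounds. I do not anticipate a substantive obstacle beyond keeping this bookkeeping straight.
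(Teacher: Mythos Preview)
Your proof is correct and is essentially the same approach as the paper's: the paper simply asserts that the decomposition exists because $\fq[t]$ is a free $\fq[t^p]$-module of rank $p$ (with basis $1,t,\dots,t^{p-1}$), whereas you make this explicit by writing down the coordinates via the inverse Frobenius on $\fq$. The degree and monicity verifications you give are exactly those in the paper, carried out with the same inequality $\deg a_j\le (n-j)/p\le n/p$ and the identification of the leading term in $a_b$.
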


Lemma \ref{lem:9} will be proven using the following proposition

\begin{prop}\label{prop:6}
Let $N\geq0$ be an integer.
For irreducible $f,g\in \fq[t][x_1,\dots,x_N]$ that are coprime in $\fq(t)[x_1,\dots,x_N]$, it holds for $N>0$ that
$$\left|\left\{a\in \fq[t]^N : \deg a_i\leq\floor{\frac{n}{p}}, \exists P, \deg P>\frac{n}{2}, P\mid f(a),g(a)\right\}\right|\ll_{f,g} q^{n\frac{N-1}{p}}.$$
For $N=0$ and $n\gg0$,
$$\left\{P : \deg P>\frac{n}{2}, P\mid f,g\right\}=\emptyset.$$
\end{prop}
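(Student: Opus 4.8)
The plan is to bound the number of $a \in \fq[t]^N$ with $\deg a_i \leq \floor{n/p}$ for which some prime $P$ of degree $> n/2$ divides both $f(a)$ and $g(a)$. The key observation is that any such $P$ must divide the resultant $\rest_{x_N}(f,g)$, or more precisely a coprimality witness, so that the existence of a large common prime factor forces the point $a$ to lie on a positive-codimension locus. I would proceed by induction on $N$. Since $f$ and $g$ are coprime in $\fq(t)[x_1,\dots,x_N]$, by clearing denominators there exist $u,v \in \fq[t][x_1,\dots,x_N]$ and a nonzero $R \in \fq[t]$ (an appropriate resultant or $\fq(t)$-linear combination witnessing coprimality) with
\begin{equation}\label{eq:coprime-witness}
u f + v g = R.
\end{equation}
First I would treat the base case $N = 0$: here $f, g \in \fq[t]$ are coprime, so $R = uf + vg$ is a fixed nonzero constant in $\fq[t]$ independent of $n$; once $n/2 > \deg R$, no prime $P$ of degree $> n/2$ can divide $R$, hence none can divide both $f$ and $g$, giving the empty set as claimed.

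For the inductive step I would specialize the last coordinate. Substituting $a = (a_1,\dots,a_N)$ into \eqref{eq:coprime-witness} shows that if $P \mid f(a), g(a)$ then $P \mid R(a)$, where $R(a) \in \fq[t]$ has degree $O_{f,g}(n)$ (the coefficients have bounded $t$-degree and we substitute polynomials of $t$-degree at most $\floor{n/p}$). The crucial dichotomy is whether $R(a) = 0$ or $R(a) \neq 0$. When $R(a) \neq 0$, the condition $\deg P > n/2$ severely restricts $P$: since $\deg R(a) \ll n$, the number of primes $P$ of degree $> n/2$ dividing $R(a)$ is $O(1)$, and for each such $P$ the congruence conditions $f(a) \equiv g(a) \equiv 0 \pmod P$ cut down the count. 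When $R(a) = 0$, the point $a$ lies on the hypersurface $\{R = 0\}$, which is a proper subvariety (as $R \neq 0$), and I would apply the inductive hypothesis after eliminating one variable.

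I expect the main obstacle to be bookkeeping the geometry correctly so that the exponent $q^{n(N-1)/p}$ comes out sharp rather than $q^{nN/p}$. The heuristic is that the total box has roughly $q^{N(\floor{n/p}+1)} \approx q^{nN/p}$ points, and imposing one algebraic relation (lying on the codimension-one locus where the large prime can appear) should save a full factor of $q^{n/p}$, giving $q^{n(N-1)/p}$. Making this precise requires care: I would view the map $a \mapsto R(a)$ or, better, use the resultant $\rest_{x_N}(f,g) \in \fq[t][x_1,\dots,x_{N-1}]$, which is nonzero by coprimality, so that $P \mid f(a), g(a)$ forces $P \mid \rest_{x_N}(f,g)(a_1,\dots,a_{N-1})$. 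This reduces to an $(N-1)$-variable problem about a single polynomial having a large prime factor; summing over the $q^{n/p}$ choices of $a_N$ for each of the $\ll q^{n(N-2)/p}$ admissible $(a_1,\dots,a_{N-1})$ (by induction applied to the pair $\rest_{x_N}(f,g)$ and a suitable companion) yields the bound $q^{n(N-1)/p}$. The delicate point is ensuring the resultant stays coprime to a second polynomial so the inductive hypothesis applies, and handling the degenerate locus where the leading coefficient in $x_N$ vanishes; these are exactly the dimension-counting subtleties that Poonen addresses in Lemma 5.1 of \cite{Poonen}, whose framework I would follow.
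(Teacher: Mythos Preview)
Your overall strategy---induction on $N$, eliminate one variable via a coprimality witness, split into the vanishing and nonvanishing loci, and handle the leading-coefficient degeneracy separately---is exactly the paper's (and Poonen's) approach. But there are two real issues.

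First, your equation \eqref{eq:coprime-witness} with $R \in \fq[t]$ is false for $N \geq 2$. Coprimality in the UFD $\fq(t)[x_1,\dots,x_N]$ means $\gcd(f,g)=1$, not that $1 \in (f,g)$; think of $x_1, x_2$ in $k[x_1,x_2]$. The correct move (which the paper makes) is to work in the PID $\fq(t,x_1,\dots,x_{N-1})[x_N]$: there B\'ezout gives $Bf+Cg=D$ with $0\neq D \in \fq[t][x_1,\dots,x_{N-1}]$, a polynomial in \emph{one fewer} $x$-variable rather than in none. This is essentially the resultant you pivot to later, so your second paragraph is on track, but the first version does not salvage.

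Second, your bookkeeping for the saving of $q^{n/p}$ is inverted, and the ``suitable companion'' you invoke does not exist in general. You want to bound the admissible $(a_1,\dots,a_{N-1})$ by $q^{n(N-2)/p}$ via the induction hypothesis and then allow $a_N$ to range freely; but the induction hypothesis needs \emph{two} coprime polynomials, and after taking the resultant you have only one. The paper resolves this with a double induction (outer on $N$, inner on the $x_N$-degree $\delta$ of $f$) and a three-way decomposition $Q' \subseteq S \cup S' \cup S_0$ once one has reduced to $g \in \fq[t][x_1,\dots,x_{N-1}]$: on $S_0=\{g(a')=0\}$ one counts hypersurface points directly ($\ll q^{n(N-2)/p}$ choices of $a'$, free $a_N$); on $S$ (where $g(a')\neq 0$ and $P\nmid f_1(a')$) one takes \emph{all} $\ll q^{n(N-1)/p}$ tuples $a'$ and shows there are only $O(1)$ admissible $a_N$ for each (bounded number of large $P\mid g(a')$, bounded number of roots of $f(a',x_N)\bmod P$, unique lift since $\deg a_N\le n/p<\deg P$); and $S'$ (where $P\mid f_1(a')$) is where the inner induction on $\delta$, or the outer induction applied to the coprime pair $(f_1,g)$ in $N-1$ variables, is actually used. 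You gesture at each of these pieces but attribute the savings to the wrong one.
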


Proposition \ref{prop:6} will be proven using the following two propositions

\begin{prop}\label{prop:7}
Let $N\geq1$ be an integer and let $0\neq g\in \fq[t][x_1,\dots,x_{N-1}]$ be a polynomial. Define
$$S_0=\left\{a\in \fq[t]^N : \deg a_i\leq\floor{\frac{n}{p}},g(a)=0\right\},$$
then it holds that
$$|S_0|\ll q^{n\frac{N-1}{p}}.$$
\end{prop}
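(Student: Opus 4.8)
The plan is to reduce the statement to counting bounded-degree $\fq[t]$-points on the hypersurface cut out by $g$, and then to run an induction on the number of variables. The first observation is that $g$ is a polynomial in $x_1,\dots,x_{N-1}$ only, so the last coordinate $a_N$ plays no role in the condition $g(a)=0$: it ranges freely over the $q^{\floor{n/p}+1}$ polynomials of degree at most $\floor{n/p}$. Factoring this out, it suffices to prove that
$$T:=\left|\left\{(a_1,\dots,a_{N-1})\in\fq[t]^{N-1}:\deg a_i\leq\floor{\tfrac{n}{p}},\ g(a_1,\dots,a_{N-1})=0\right\}\right|\ll q^{n\frac{N-2}{p}},$$
since then $|S_0|=q^{\floor{n/p}+1}\cdot T\ll q^{n/p}\cdot q^{n\frac{N-2}{p}}=q^{n\frac{N-1}{p}}$, using that $q^{\floor{n/p}+1}\ll q^{n/p}$.

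I would prove the bound on $T$ by induction on the number $m$ of variables of the nonzero polynomial $g$ (here $m=N-1$). When $N=1$ the polynomial $g$ is a nonzero constant in the $x$-variables, so $g(a)=0$ has no solutions and $T=0$, which is the claim. The base case $m=1$ is the factor theorem over the integral domain $\fq[t]$: a nonzero univariate polynomial $g(x_1)$ has at most $\deg_{x_1}g=O_g(1)$ roots in $\fq[t]\subset\fq(t)$, so $T\ll 1=q^{n\frac{m-1}{p}}$.

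For the inductive step ($m\geq2$) I would view $g$ as a polynomial in its top variable $x_m$, say $g=\sum_{k=0}^{D}c_k(x_1,\dots,x_{m-1})x_m^k$ with leading coefficient $c_D\neq0$ and $D=\deg_{x_m}g$, and split according to the fibre over a fixed $(a_1,\dots,a_{m-1})$. If the specialised polynomial $g(a_1,\dots,a_{m-1},x_m)$ is not identically zero, then it is a nonzero element of $\fq[t][x_m]$ of degree at most $D$, hence has at most $D$ roots $a_m$; summing over the at most $q^{(\floor{n/p}+1)(m-1)}\ll q^{n\frac{m-1}{p}}$ choices of $(a_1,\dots,a_{m-1})$ contributes $\ll q^{n\frac{m-1}{p}}$. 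If instead the specialisation vanishes identically, then in particular $c_D(a_1,\dots,a_{m-1})=0$; since $c_D$ is a nonzero polynomial in $m-1$ variables, the induction hypothesis bounds the number of such $(a_1,\dots,a_{m-1})$ by $\ll q^{n\frac{m-2}{p}}$, and the free choice of $a_m$ multiplies this by $q^{\floor{n/p}+1}\ll q^{n/p}$, again contributing $\ll q^{n\frac{m-1}{p}}$. Adding the two cases gives $T\ll q^{n\frac{m-1}{p}}$, completing the induction.

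The conceptual content is the standard heuristic that a hypersurface has codimension one, so restricting to its points of bounded degree saves exactly one factor of $q^{n/p}$. I expect the main obstacle to be purely in the bookkeeping: keeping the free coordinate $a_N$ separate from the variables of $g$, correctly matching the dichotomy (generic fibre finite versus degenerate fibre forcing a lower-dimensional condition) to the induction hypothesis, and tracking that the accumulated implied constants depend only on $g$, through the degrees $D$ appearing at each level, and not on $n$. The only ring-theoretic input needed is that $\fq[t]$ is an integral domain, which makes the factor theorem available both in the base case and in the generic fibre of the inductive step.
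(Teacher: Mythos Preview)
Your proposal is correct and follows essentially the same route as the paper: strip off the free coordinate $a_N$, then induct on the number of active variables, splitting at each step into the generic fibre (specialised polynomial nonzero, hence $\leq D$ roots) and the degenerate fibre (leading coefficient $c_D$ vanishes, handled by the induction hypothesis). The only cosmetic differences are that the paper inducts on $N$ directly rather than first factoring out $a_N$, and splits on $c_D(a')=0$ versus $c_D(a')\neq 0$ rather than on whether the full specialisation vanishes identically; your version is logically equivalent and arguably cleaner.
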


\begin{prop}\label{prop:8}
Let $N\geq1$ be an integer and let $f\in \fq[t][x_1,\dots,x_{N}]$, $g\in \fq[t][x_1,\dots,x_{N-1}]$ be polynomials. 
Denote by $f_1\in \fq[t][x_1,\dots,x_{N-1}]$ the coefficient of the highest power of $x_N$ in $f$ when looking at $f$ as polynomials in $x_N$.
Define
\begin{equation}
\begin{split}
\nonumber
S&=\{a\in \fq[t]^N : \deg a_i\leq\floor{\frac{n}{p}}, \exists P, \deg P>\frac{n}{2}, P\mid f(a),g(a),
\\
&\left.P\nmid f_1(a),g(a)\neq0\right\},
\end{split}
\end{equation}
then it holds that
$$|S|\ll_{f,g} q^{n\frac{N-1}{p}}.$$
\end{prop}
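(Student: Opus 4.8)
The plan is to fix the first $N-1$ coordinates and count the admissible last coordinate. Write $a=(a',a_N)$ with $a'=(a_1,\dots,a_{N-1})$, and note that both $g$ and the leading coefficient $f_1$ depend only on $x_1,\dots,x_{N-1}$, so that $g(a)=g(a')$ and $f_1(a)=f_1(a')$. There are at most $(q^{\lfloor n/p\rfloor+1})^{N-1}\ll q^{n(N-1)/p}$ choices of $a'$, so it suffices to show that for each fixed $a'$ the number of admissible $a_N$ is $O_{f,g}(1)$.

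First I would bound, for a fixed $a'$ with $g(a')\neq 0$, the number of primes $P$ with $\deg P>n/2$ dividing $g(a')$. Since $\deg a_i\le\lfloor n/p\rfloor$ and the coefficients of $g$ have bounded $t$-degree, expanding $g$ monomial by monomial gives $\deg g(a')\ll_g n/p$. As each such prime contributes at least $n/2$ to $\deg g(a')$ and $g(a')\neq0$, the number of them is at most $\deg g(a')/(n/2)=O_g(1)$.

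The key step is then to bound, for each such prime $P$ (which by the definition of $S$ also satisfies $P\nmid f_1(a')$), the number of $a_N$ with $\deg a_N\le\lfloor n/p\rfloor$ and $f(a',a_N)\equiv0\pmod P$. Since $P\nmid f_1(a')$, the reduction $f(a',x_N)\bmod P$ is a polynomial in $x_N$ over the field $\fq[t]/\langle P\rangle$ of degree exactly $\deg_{x_N}f$, hence has at most $\deg_{x_N}f$ roots in that field. Crucially, because $\deg P>n/2\ge n/p\ge\deg a_N$, distinct polynomials $a_N$ of degree at most $\lfloor n/p\rfloor$ lie in distinct residue classes modulo $P$ (a nonzero difference of two of them has degree below $\deg P$, hence is not divisible by $P$), so reduction modulo $P$ is injective on this range; therefore at most $\deg_{x_N}f=O_f(1)$ values of $a_N$ satisfy the congruence.

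Combining, for each fixed $a'$ there are $O_g(1)$ admissible primes, each admitting $O_f(1)$ values of $a_N$, so $O_{f,g}(1)$ admissible $a_N$ in total; summing over the $\ll q^{n(N-1)/p}$ choices of $a'$ yields $|S|\ll_{f,g}q^{n(N-1)/p}$. The main obstacle is the interplay of the two size constraints: the argument works only because the single inequality $\deg P>n/2\ge n/p$ simultaneously forces the number of large prime divisors of $g(a')$ to be bounded and makes reduction modulo $P$ injective on polynomials of degree $\le\lfloor n/p\rfloor$. If $\deg_{x_N}f=0$ the set $S$ is empty, since then $P\mid f(a')=f_1(a')$ contradicts $P\nmid f_1(a')$, so the bound is trivial in that degenerate case.
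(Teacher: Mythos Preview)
Your proof is correct and follows essentially the same approach as the paper: fix $a'=(a_1,\dots,a_{N-1})$, bound the number of large primes dividing $g(a')$ using the degree bound $\deg g(a')\ll_g n/p$, then for each such prime use $P\nmid f_1(a')$ to see that $f(a',x_N)\bmod P$ has bounded degree in $x_N$, and finally use $\deg P>n/2\ge \lfloor n/p\rfloor$ to get injectivity of reduction modulo $P$ on the admissible range of $a_N$. Your treatment is in fact slightly more careful than the paper's in that you explicitly dispose of the degenerate case $\deg_{x_N}f=0$ (where $S=\emptyset$), which the paper leaves implicit since the proposition is only invoked with $\delta>0$.
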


\begin{proof}[Proof of Lemma \ref{lem:8}]
Denote the sets from Lemma \ref{lem:7} by $A_n$, for each $n$. By Lemma \ref{lem:7} we get
\begin{equation}\label{eq:7.1}
\begin{split}
&\left|\left\{a\in M_n(q) : \exists P, \deg P>\frac{n}{2}, P^2\mid f(a)\right\}\right|
\\
&=\left|\left\{\sum_{j=0}^{p-1}{t^ja_j(t)^p}: (a_0(t),\dots,a_{p-1}(t))\in A_n,\exists P, \deg P>\frac{n}{2},\right.\right.
\\
&\left.\left.P^2\mid f(\sum_{j=0}^{p-1}{t^ja_j(t)^p})\right\}\right|
\\
&=\left|\left\{(a_0(t),\dots,a_{p-1}(t))\in A_n : \exists P, \deg P>\frac{n}{2}, P^2\mid F(a_0(t),\dots,a_{p-1}(t))\right\}\right|
\end{split}
\end{equation}
\begin{equation}
\begin{split}
\nonumber
&=\left|\left\{(a_0(t),\dots,a_{p-1}(t))\in A_n : \exists P, \deg P>\frac{n}{2}, P\mid F(a_0(t),\dots,a_{p-1}(t)),\right.\right.
\\
&\left.\left.P\mid \frac{\d F(a_0(t),\dots,a_{p-1}(t))}{\d t}\right\}\right|.
\end{split}
\end{equation}

Notice that by the total derivative formula for $F$, and since $F(y_0,\dots,y_{p-1})\in\fq[t]\left[y_1^p,\dots,y_{p-1}^p\right]$, it holds that
\begin{align*}\label{eq:7.2}
&\frac{\d F(a_0(t),\dots,a_{p-1}(t))}{\d t}
\\\stepcounter{equation}\tag{\theequation}
&=\frac{\partial F(y_0,\dots,y_{p-1})}{\partial t}(a_0(t),\dots,a_{p-1}(t))
\\
&+\frac{\partial F(y_0,\dots,y_{p-1})}{\partial y_0}(a_0(t),\dots,a_{p-1}(t))\frac{\d a_0(t)}{\d t}
\\
&+\frac{\partial F(y_0,\dots,y_{p-1})}{\partial y_1}(a_0(t),\dots,a_{p-1}(t))\frac{\d a_1(t)}{\d t}+\cdots
\\
&+\frac{\partial F(y_0,\dots,y_{p-1})}{\partial y_{p-1}}(a_0(t),\dots,a_{p-1}(t))\frac{\d a_{p-1}(t)}{\d t}
\\
&=\frac{\partial F(y_0,\dots,y_{p-1})}{\partial t}(a_0(t),\dots,a_{p-1}(t))+0+\cdots+0
\\
&=\frac{\partial F(y_0,\dots,y_{p-1})}{\partial t}(a_0(t),\dots,a_{p-1}(t)).
\end{align*}

Combining \eqref{eq:7.1} and \eqref{eq:7.2} we get:

\begin{equation}\label{eq:7.3}
\begin{split}
&\left|\left\{a\in M_n(q) : \exists P, \deg P>\frac{n}{2}, P^2\mid f(a)\right\}\right|=\\
&\left|\left\{a=(a_0(t),\dots,a_{p-1}(t))\in A_n : \exists P, \deg P>\frac{n}{2}, P\mid F(a),P\mid \frac{\partial F}{\partial t}(a)\right\}\right|.
\end{split}
\end{equation}

Now, by Lemma \ref{lem:7}, for all $a \in A_n$ we have that $\deg a_j(t) \leq \lfloor n/p \rfloor$ for each $j$.
Therefore,
$$\left\{a=(a_0(t),\dots,a_{p-1}(t))\in A_n : \exists P, \deg P>\frac{n}{2}, P\mid F(a),P\mid \frac{\partial F}{\partial t}(a)\right\}\subseteq Q,$$
which together with \eqref{eq:7.3} proves the lemma.
\end{proof}

\begin{proof}[Proof of Lemma \ref{lem:7}]
The existence of such $A_n$ for each $n$ follows from the fact that $\fq[t]$ is a free $\fq[t^p]$ module of rank $p$ with the obvious action.

Denote by $b$ the unique integer such that $b\equiv n \pmod{p}$ and $0\leq b<p$.
By the definition of $A_n$, for all $a \in A_n$ we have that $\deg\sum_{j=0}^{p-1}{t^ja_j(t)^p} = n$. 
Thus, using the fact that degrees of polynomials are integers, we get
\begin{equation}
\begin{split}
\nonumber
&a_b(t)\in M_{\floor{\frac{n}{p}}}(q)\textnormal{ and } \forall j,0\leq j\leq {p-1}, \deg a_j(t)\leq\frac{n-j}{p}\leq\frac{n}{p}\Longrightarrow\\
&\forall j,0\leq j\leq {p-1}, \deg a_j(t)\leq\lfloor n/p \rfloor,
\end{split}
\end{equation}
as desired.
\end{proof}

\begin{proof}[Proof of Lemma \ref{lem:9}]

By Lemma 7.2 in \cite{Poonen}, $F\in\fq[t][y_0,\dots,y_{p-1}]$ is square-free because $f$ is square-free.

The next argument, showing that the case where $F, \frac{\partial F}{\partial t}\in\fq(t)[y_0,\dots,y_{p-1}]$ are not coprime is degenerate, replaces Lemma 7.3  in \cite{Poonen} to allow the proof to be easily generalized to the $k$-free case.

Set $G=\gcd\left(F, \frac{\partial F}{\partial t}\right)\in\fq(t)\left[y_0,\dots,y_{p-1}\right]$. By the total derivative formula one has, just like in the proof of Lemma \ref{lem:8},
that for all $a(t)\in\fq[t]^p$ it holds that $\frac{\d F(a)}{\d t}=\frac{\partial F}{\partial t}(a)$,
because the rest of the partial derivatives vanish since $F\in\fq(t)[y_0^p,\dots,y_{p-1}^p]$.

If $\deg G>0$, then for $a(t)\in\fq[t]^p$ let $P(t)$ be a prime factor of $G(a)$ (such $P(t)$ exists because $\deg G>0$). Then $P^2\mid F(a)$ (because $P\mid G(a)$, $G(a)\mid F(a)$, and $G(a)\mid \frac{\partial F}{\partial t}(a)=\frac{\d F(a)}{\d t}$).

Thus if $\deg G>0$, then for all $a(t)\in\fq[t]^p$ , $F(a)$ is not square-free. Thus by Lemma \ref{lem:7}, for all $a(t)\in\fq[t]$, $f(a)$ is not square-free which gives $|\Set_{f,2}(n)|=0$. Now by Theorem 3.4 in \cite{Poonen} we have
$$\lim_{n\to\infty}\frac{|\{a\in M_n(q) : f(a) \textnormal{ is square-free}\}|}{|M_n(q)|}=$$
$$\prod_P{\left(1-\frac{|\{c\in\fq[t] : \deg c<\deg P^2, f(c)\equiv0 \!\!\!\!\pmod{P^2}\}|}{|P|^2}\right)}.$$
Thus if for some $a(t)\in\fq[t]$, $f(a)$ is not square-free, then
$$\prod_P{\left(1-\frac{|\{c\in\fq[t] : \deg c<\deg P^2, f(c)\equiv0 \!\!\!\!\pmod{P^2}\}|}{|P|^2}\right)}=0.$$
By Lemma \ref{bound-lem} and the fact that the sum $\sum_P{\frac{1}{|P|^2}}$ converges, the infinite product converges. So the vanishing of the product happens only when there is some prime $P$ such that $P^2|f(c)$ for all $c\in\fq[t]$. Thus we get
\begin{equation}
\begin{split}
\nonumber
\rho_f\left(P^2\right)&=\left|\left\{c\in\fq[t] : \deg c<\deg P^2, \gcd(c,P)=1,  f(c)\equiv0\pmod{P^2}\right\}\right|
\\
&=|P|^2-|P|.
\end{split}
\end{equation}
This implies that
$$c_{f,2}=\prod_P \left(1-\frac{\rho_f\left(P^2\right)}{|P|^2-|P|}\right)=0,$$
which concludes the result of the lemma for the case of $\deg G>0$.

Now let us assume that $\deg G=0$, which means that $F$ and $\frac{\partial F}{\partial t}$ are coprime in $\fq(t)[y_0,\dots,y_{p-1}]$.

If the decompositions of $F$ and $\frac{\partial F}{\partial t}$ into irreducibles are $F=f_1\cdots f_{i_f}$ and $\frac{\partial F}{\partial t}=g_1\cdots g_{i_g}$ ($f_i\neq g_j$ because $F$ is square-free, thus $f_i, g_j$ are coprime in $\fq(t)[y_0,\dots,y_{p-1}]$ since they are both irreducible), then
\begin{equation}
\begin{split}
\nonumber
Q&=\left\{a\in \fq[t]^p : \deg a_i\leq\floor{\frac{n}{p}}, \exists P, \deg P>\frac{n}{2}, P\mid F(a),\frac{\partial F}{\partial t}(a)\right\}
\\
&=\bigcup_{i,j}\left\{a\in \fq[t]^p : \deg a_i\leq\floor{\frac{n}{p}}, \exists P, \deg P>\frac{n}{2}, P\mid f_l(a),g_j(a)\right\}.
\end{split}
\end{equation}
Recall that
$$F(y_0,\dots,y_{p-1})=f\left(\sum_{j=0}^{p-1}{t^jy_j^p}\right)\in\fq[t][y_0,\dots,y_{p-1}],$$
thus the number of irreducibles in the decomposition of $F$ is bounded by $p\cdot\deg f$ and thus this also bounds the number of irreducibles in the decomposition of $\frac{\partial F}{\partial t}$.
Now, by proposition \ref{prop:6} we conclude the proof
\begin{equation}
\begin{split}
\nonumber
|Q|&=\left|\bigcup_{i,j}\left\{a\in \fq[t]^p : \deg a_i\leq\floor{\frac{n}{p}}, \exists P, \deg P>\frac{n}{2}, P\mid f_l(a),g_j(a)\right\}\right|
\\
&\ll q^{n\frac{p-1}{p}}\cdot(p\cdot\deg f)^2\ll q^{n\frac{p-1}{p}},
\end{split}
\end{equation}
the product constant depending on the field size $q$ and the polynomial $f$ was neglected because as stated in the main theorem (theorem \ref{main thm}) the error term depends on $f$ and the finite field size $q$. 
\end{proof}

\begin{proof}[Proof of Proposition \ref{prop:6}]
Note that $f, g\neq 0$ because $f$ and $g$ are coprime.

Since we are interested in $P$ with a large value of $|P|$, we may divide $f,g$ by common factors in $\fq[t]$ and assume that $f,g$ are coprime as elements of $\fq[t][x_1,\dots,x_N]$.

Denote:
$$Q'=\left\{a\in \fq[t]^N : \deg a_i\leq\floor{\frac{n}{p}}, \exists P, \deg P>\frac{n}{2}, P\mid f(a),g(a)\right\}.$$
We proceed by induction on $N$.

If $N=0$, then $f,g\in\fq[t]$. Thus for $\frac{n}{2}>\max\{\deg f,\deg g\}$ it holds that $Q'=\emptyset$.

Now assume $N\geq1$. Denote by $f_1,g_1\in \fq[t][x_1,\dots,x_{N-1}]$ the coefficients of the highest power of $x_N$ in $f,g$, respectively, when looking at $f,g$ as polynomials in $x_N$.

Case 1: Assume the $x_N$-degrees of both $f$ and $g$ are positive.
Since $f, g$ are coprime in $\fq[t][x_1,\dots,x_N]$, they are also coprime if viewed as single-variable polynomials in $\fq(t,x_1,\dots,x_{N-1})[x_N]$.
Thus by the B\'ezout Identity, there are $b,c \in \fq(t,x_1,\dots,x_{N-1})[x_N]$ such that
$1=bf+cg$.
Multiplying by the common denominator it follows that there are $B,C \in \fq[t][X_1,\dots,X_{N-1}][X_N]$
and $0\neq D \in \fq[t][X_1,\dots,X_{N-1}]$ such that
$D=Bf+Cg$.

Note: The polynomial $D$ here replaces the resultant used in Poonen's proof of Lemma 5.1 in \cite{Poonen}. The change is done so that it will be easier to generalize the proof to the $k$-free case.

Since $D$ is nonzero and does not involve $x_N$ and since $f,g$ are irreducible, it follows that $D$ is coprime with each of $f,g$.
If $P$ divides $f(a)$ and $g(a)$, then from $D=Bf+Cg$ it follows that $P\mid D(a)$.
Thus,
$$Q'\subseteq\left\{a\in \fq[t]^N : \deg a_i\leq\floor{\frac{n}{p}}, \exists P, \deg P>\frac{n}{2}, P\mid f(a),D(a)\right\}.$$

Since $D$ only depends on $f,g$, its number of factors can be absorbed in the implicit constant implied by $\ll_{f,g}$.
Thus, by looking at the irreducible factors of $D$, just as in the second half of the proof of Lemma \ref{lem:9}, we can assume that $D$ is irreducible. This reduces the problem to dealing with $f,g$ such that one of them is in $\fq[t][x_1,\dots,x_{N-1}]$ and thus does not depend on $x_N$.

Case 2: Suppose that one of $f,g$ is in $\fq[t][x_1,\dots,x_{N-1}]$. Without loss of generality, we can assume that it is $g$. In this case the proof will be by induction on $\delta$, where $\delta$ is the $x_N$-degree of $f$.
If $\delta=0$, then $f, g\in\fq[t][x_1,\dots,x_{N-1}]$ and according to the outer induction hypothesis,
$$\left|\left\{a\in \fq[t]^{N-1} : \deg a_i\leq\floor{\frac{n}{p}}, \exists P, \deg P>\frac{n}{2}, P\mid f(a),g(a)\right\}\right|\ll_{f,g} q^{n\frac{N-2}{p}},$$
whence
\begin{equation}
\begin{split}
\nonumber
&\left|\left\{a\in \fq[t]^{N} : \deg a_i\leq\floor{\frac{n}{p}}, \exists P, \deg P>\frac{n}{2}, P\mid f(a),g(a)\right\}\right|
\\
=&\left|\left\{a\in \fq[t]^{N-1} : \deg a_i\leq\floor{\frac{n}{p}}, \exists P, \deg P>\frac{n}{2}, P\mid f(a),g(a)\right\}\right|
\\
&\cdot\left|\left\{a\in \fq[t] : \deg a\leq\floor{\frac{n}{p}}\right\}\right|
\\
&\ll_{f,g} q^{n\frac{N-2}{p}}\cdot q^{\floor{\frac{n}{p}}}\leq q^{n\frac{N-2}{p}+\frac{n}{p}}=q^{n\frac{N-1}{p}}.
\end{split}
\end{equation}
So let us assume $\delta>0$ and define

\begin{equation}
\begin{split}
\nonumber
S'&=\left\{a\in \fq[t]^N : \deg a_i\leq\floor{\frac{n}{p}}, \exists P, \deg P>\frac{n}{2}, P\mid f_1(a),g(a)\right\},
\\
S_0&=\left\{a\in \fq[t]^N : \deg a_i\leq\floor{\frac{n}{p}},g(a)=0\right\},
\\
S&=\{a\in \fq[t]^N : \deg a_i\leq\floor{\frac{n}{p}}, \exists P, \deg P>\frac{n}{2}, P\mid f(a),g(a),
\\
&\left.P\nmid f_1(a),g(a)\neq0\right\}.
\end{split}
\end{equation}

It holds that $Q'\subseteq S\cup S'\cup S_0$.

If $g|f_1$, then by subtracting a multiple of $g$ from $f$, $Q'$ is not changed and the new $f$ is still coprime to $g$. In this way the degree of $f$ can be lowered, which then allows us to use the inner inductive hypothesis to get the desired result.
So assume now that $g\nmid f_1$ and since we assumed that $g$ is irreducible this means that $g,f_1$ are coprime in $\fq[t][x_1,\dots,x_{N-1}]$. Thus by applying the hypothesis of the outer induction to
$f_1, g\in\fq[t][x_1,\dots_,x_{N-1}]$, just like in the case $\delta=0$, we conclude that 
\begin{equation}\label{S' estimate}
|S'|\ll_{f,g} q^{n\frac{N-1}{p}}.
\end{equation}

Using this together with propositions \ref{prop:7} and \ref{prop:8} allows us to conclude that
$$|Q'|=|S\cup S'\cup S_0|\ll_{f,g} q^{n\frac{N-1}{p}}$$
as desired.
\end{proof}

\begin{proof}[Proof of Proposition \ref{prop:7}]
We will show that $|S_0|\ll q^{n\frac{N-1}{p}}$ by using induction on $N$.

For $N=1$, since $g\in\fq[t][x_1,\dots,x_{N-1}]$, it follows that $g\in\fq[t]$ and since $g\neq 0$,
$$\left\{a\in \fq[t] : \deg a\leq\floor{\frac{n}{p}},g(a)=0\right\}=\emptyset.$$
Let us assume now that the assertion is true for $N-1$, where $N>1$. Denote by $g_2$ the coefficient of the highest power of $x_{N-1}$ in $g$. If $g_2=0$, then $g\in \fq[t][x_1,\dots,x_{N-2}]$. By the induction hypothesis, for all $0\neq h\in \fq[t][x_1,\dots,x_{N-2}]$,
\begin{equation}
\begin{split}
\nonumber
&\left|\left\{a\in \fq[t]^N : \deg a_i\leq\floor{\frac{n}{p}},h(a)=0\right\}\right|=
\\
&\left|\left\{a\in \fq[t]^{N-1} : \deg a_i\leq\floor{\frac{n}{p}},h(a)=0\right\}\right|
\\
&\cdot\left|\left\{a\in \fq[t] : \deg a\leq\floor{\frac{n}{p}}\right\}\right|
\\
&\ll q^{n\frac{N-2}{p}}\cdot q^{\floor{\frac{n}{p}}}\leq q^{n\frac{N-2}{p}+\frac{n}{p}}=q^{n\frac{N-1}{p}},
\end{split}
\end{equation}
as desired. 

Assume now that $g_2\neq 0$. Since $g_2\in \fq[t][x_1,\dots,x_{N-2}]$, it follows as we just seen that
$$\left|\left\{a\in \fq[t]^N : \deg a_i\leq\floor{\frac{n}{p}},g_2(a)=0\right\}\right|\ll q^{n\frac{N-1}{p}}.$$

Now

\begin{equation}
\begin{split}
\nonumber
&\left\{a\in \fq[t]^N : \deg a_i\leq\floor{\frac{n}{p}},g(a)=0\right\}
\\
\subset &\left\{a\in \fq[t]^{N} : \deg a_i\leq\floor{\frac{n}{p}},g_2(a)=0\right\}
\\
&\cup \left\{a\in \fq[t]^N : \deg a_i\leq\floor{\frac{n}{p}},g(a)=0, g_2(a)\neq0\right\}.
\end{split}
\end{equation}

Thus we only need to show that
$$\left|\left\{a\in \fq[t]^N : \deg a_i\leq\floor{\frac{n}{p}},g(a)=0, g_2(a)\neq0\right\}\right|\ll q^{n\frac{N-1}{p}}.$$

Denote by $\deg_{x_{N-1}}g$ the degree of $g$ as a polynomial in the variable $x_{N-1}$. For each $(a_1,\dots,a_{N-2})\in \fq[t]^{N-2}$ there are at most $\deg_{x_{N-1}}g$ values of $a_{N-1}\in\fq[t]$ such that $g(a_1,\dots,a_{N-2}, a_{N-1})=0$; indeed, this holds because $g(a_1,\dots,a_{N-2}, x_{N-1})$ is a polynomial in $x_{N-1}$ of degree $\deg_{x_{N-1}}g$.
Using this and the induction hypothesis gives
\begin{equation}
\begin{split}
\nonumber
&\left|\left\{a\in \fq[t]^N : \deg a_i\leq\floor{\frac{n}{p}},g(a)=0, g_2(a)\neq0\right\}\right|
\\
=&\left|\left\{a\in \fq[t]^{N-1} : \deg a_i\leq\floor{\frac{n}{p}},g(a)=0, g_2(a)\neq0\right\}\right|
\\
&\cdot\left|\left\{a\in \fq[t] : \deg a\leq\floor{\frac{n}{p}}\right\}\right|
\end{split}
\end{equation}
\begin{equation}
\begin{split}
\nonumber
=&\left|\left\{a\in \fq[t]^{N-2} : \deg a_i\leq\floor{\frac{n}{p}},g(a)=0, g_2(a)\neq0\right\}\right|
\\
&\cdot\left|\left\{a\in \fq[t] : \deg a\leq\floor{\frac{n}{p}}\right\}\right|^2 \cdot \deg_{x_{N-1}}g
\\
\ll &q^{n\frac{N-3}{p}}\cdot q^{2\floor{\frac{n}{p}}}\leq q^{n\frac{N-3}{p}+\frac{2n}{p}}=q^{n\frac{N-1}{p}},
\end{split}
\end{equation}

as desired.

\end{proof}

\begin{proof}[Proof of Proposition \ref{prop:8}]
Take $a'=(a_1,\dots,a_{N-1})\in \fq[t]^{N-1}$ with $\deg a_i\leq\floor{\frac{n}{p}}$. We will count the number of $a_N\in\fq[t]$ such that  $a=(a',a_N)\in S$.
Let $\deg g$ be the total degree of $g$. Since $a\in S$, the definition of $S$ implies that $g(a)\neq0$.
Since $g\in\fq[t][x_1,\dots,x_{N-1}]$ does not depend on $x_N$, we get $g(a)=g(a')$.
Therefore,
$$\deg g(a)=\deg g(a')\leq\deg g\cdot\max\{\deg a_1,\dots,\deg a_{N-1}\}\leq\deg g\cdot\floor{\frac{n}{p}}.$$
Thus, since $\floor{\frac{n}{p}}\leq\frac{n}{2}$ and $g(a)\neq0$, there can be at most $\deg g$ different primes $P$ such that $\deg P>\frac{n}{2}$ and $P\mid g(a)$.
Now for $P$ such that $P\nmid f_1(a)$, it holds that $f(a',x_N) \pmod{P}\in(\fq[t]/\langle P \rangle)[x_N ]$ is a polynomial of degree $\delta>0$ over the field $\fq[t]/\langle P\rangle$ (it is a field since $P$ is prime). Thus in this case $f(a',x_N) \pmod{P}$ has at most $\delta$ roots over the field $\fq[t]/\langle P\rangle$ (this is the reason why the case $P\mid f_1(a)$ needs to be dealt with separately, because in that case it might happen that $f(a',x_N) \pmod{P}$ is 0, which would prevent us from bounding the number of its roots).
Now for $P$ such that $\deg P>\frac{n}{2}$ it holds that each $c\in\fq[t]/\langle P\rangle$ has at most one $a_N\in\fq[t]$ such that $\deg a_N\leq\floor{\frac{n}{p}}\leq\frac{n}{2}$ and $a_N\equiv c \pmod{P}$.

We see that there are at most $\deg g$ primes $P$ such that $P\mid g(a')$, and for every such $P$ there are at most $\delta\cdot O(1)$ values of $a_N\in\fq[t]$ with $\deg a_N\leq \frac{n}{2}$ such that $P\mid f(a',a_N)$.

We conclude that for each $a'=(a_1,\dots,a_{N-1})\in \fq[t]^{N-1}$ with $\deg a_i\leq\floor{\frac{n}{p}}$ there are $O(1)$ values of $a=(a',a_N)\in \fq[t]^N$ with $\deg a_N\leq\floor{\frac{n}{p}}$ such that $a\in S$.
Thus
\begin{equation}
\begin{split}
\nonumber
|S|&\ll_{f,g} \left|\left\{(a_1,\dots,a_{N-1})\in \fq[t]^{N-1}:\deg a_i\leq\floor{\frac{n}{p}}\right\}\right|
\\
&\ll_{f,g} q^{\floor{\frac{n}{p}}(N-1)}\ll_{f,g} q^{n\frac{N-1}{p}}.
\end{split}
\end{equation}
\end{proof}

\section{Remarks}

\subsection{Positivity of $c_{f,2}$}\label{degenerate case}

In Theorem \ref{main thm} we proved that
\begin{equation}
\frac{|\Set_{f,2}(n)|}{|\pi_q(n)|} = c_{f,2} + O_{f,q}\left(\frac
{1}{\log_q n}\right)\quad \mbox{as } n\to \infty,
\end{equation}
with
\begin{equation}
c_{f,2}=\prod_P \left(1-\frac{\rho_f\left(P^2\right)}{|P|^2-|P|}\right),
\end{equation}
 where the product runs over the prime polynomials $P$, and for every polynomial $D\in
\fq[t]$, 
$$\rho_f(D)=|\{C\in\fq[t]: \deg C < \deg D, \gcd(D,C)=1,  f(C)\equiv0\pmod{D}\}|.$$
Denote the discriminant of $f$ over $\fq(t)$ by $\Delta(f)$ and denote by $w_f(t)\in\fq[t]$ the leading coefficient of $f$ as a polynomial in $x$ over $\fq[t]$.
We now investigate when $c_{f,2}$ is nonzero:

\begin{prop}\label{prop8}
The following conditions are equivalent:
\begin{enumerate}
\item $c_{f,2}>0$.
\item There are infinitely many primes P such that f(P) is square-free.
\item There is a prime $P\in\fq[t]$ with $\deg P>\max\{\deg\Delta(f),\deg w_f\}$ such that $f(P)$ is square-free.
\item For every prime $P\in\fq[t]$, there is a polynomial $C\in\fq[t]$ such that $P\nmid C$ and $P^2\nmid f(C)$.
\item For each prime $P\in\fq[t]$ such that $\deg P\leq\max\{\deg\Delta(f),\deg w_f\}$, there is a polynomial $C\in\fq[t]$ such that $P\nmid C$ and $P^2\nmid f(C)$. Note that this condition can be checked by a finite computation.
\end{enumerate}
\end{prop}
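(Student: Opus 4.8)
The plan is to route all five statements through a single notion: call a prime $P$ \emph{bad} if $\rho_f(P^2)=\phi(P^2)=|P|^2-|P|$, i.e. if every residue coprime to $P$ is a root of $f$ modulo $P^2$. Two preliminary facts will drive everything. First, since $\rho_f(P^2)$ counts a subset of the $\phi(P^2)$ invertible residues modulo $P^2$, each Euler factor $1-\rho_f(P^2)/(|P|^2-|P|)$ lies in $[0,1]$, and because $\sum_P \rho_f(P^2)/(|P|^2-|P|)$ converges (the uniform bound $\rho_f(P^2)=O(1)$ from Lemma \ref{bound-lem} together with convergence of $\sum_P|P|^{-2}$, as used in the proof of Proposition \ref{prop:1.5}), the product $c_{f,2}$ vanishes if and only if some factor vanishes, i.e. if and only if a bad prime exists. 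This already gives $(1)\Leftrightarrow(4)$, since $(4)$ is literally the assertion that no prime is bad.

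The second fact is that every bad prime has $\deg P\le\max\{\deg\Delta(f),\deg w_f\}$. I would prove this with Hensel's Lemma (Lemma \ref{Hensel's Lemma}): if $\deg P>\max\{\deg\Delta(f),\deg w_f\}$, then each root of $f$ modulo $P$ lifts to a \emph{unique} root modulo $P^2$ in its residue class. Fix the unit class of $1$ modulo $P$; it splits into $q^{\deg P}\ge 2$ residue classes modulo $P^2$, all coprime to $P$, of which at most one is a root of $f$ modulo $P^2$. Hence some $C$ coprime to $P$ satisfies $P^2\nmid f(C)$, so $P$ is not bad. This is the crux of the finiteness, and the step I expect to be the main obstacle, because the naive bound $\rho_f(P^2)\le\deg f$ is \emph{not} by itself enough to beat $\phi(P^2)$ for the finitely many primes whose degree only barely exceeds the bound; one really needs uniqueness of the Hensel lift, which forces $q^{\deg P}-1\ge1$ non-roots in each class. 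As a corollary, $(4)\Leftrightarrow(5)$: the direction $(4)\Rightarrow(5)$ is trivial, and for $(5)\Rightarrow(4)$ the condition on primes of degree $\le\max\{\deg\Delta(f),\deg w_f\}$ is supplied by $(5)$ while for larger primes the non-badness is automatic; the same fact shows there are only finitely many bad primes, so $(5)$ is a finite check.

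It remains to close the loop $(1)\Rightarrow(2)\Rightarrow(3)\Rightarrow(1)$. For $(1)\Rightarrow(2)$ I would invoke Theorem \ref{main thm}: if $c_{f,2}>0$ then $|\Set_{f,2}(n)|=c_{f,2}|\pi_q(n)|(1+o(1))>0$ for all large $n$, giving primes of infinitely many degrees with $f(P)$ square-free. For $(2)\Rightarrow(3)$, only finitely many primes have degree $\le\max\{\deg\Delta(f),\deg w_f\}$, so among the infinitely many square-free witnesses at least one has larger degree. For $(3)\Rightarrow(1)$ I argue by contraposition: if $c_{f,2}=0$, choose a bad prime $P_0$, which by the second fact has $\deg P_0\le\max\{\deg\Delta(f),\deg w_f\}$. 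Any prime $P_1$ with $\deg P_1>\max\{\deg\Delta(f),\deg w_f\}$ is then distinct from $P_0$, hence coprime to $P_0$, hence reduces to a unit modulo $P_0^2$, which is a root of $f$; thus $P_0^2\mid f(P_1)$ and $f(P_1)$ is not square-free. So no prime of degree exceeding the bound gives a square-free value, contradicting $(3)$. The underlying mechanism here, that a single local obstruction at $P_0$ propagates to almost every prime, is precisely what makes the dichotomy sharp.
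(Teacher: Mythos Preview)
Your proof is correct and follows essentially the same route as the paper: both identify that $c_{f,2}=0$ iff some factor vanishes (via Lemma~\ref{bound-lem} and convergence), both use Hensel's Lemma to show primes of degree exceeding $\max\{\deg\Delta(f),\deg w_f\}$ are never bad, and both invoke Theorem~\ref{main thm} for $(1)\Rightarrow(2)$. The only organizational difference is that the paper closes the loop via $(3)\Rightarrow(5)$ (taking $C=P$ directly) while you prove the contrapositive $(3)\Rightarrow(1)$; these are the same argument rephrased. One small remark: your aside that ``the naive bound $\rho_f(P^2)\le\deg f$ is not by itself enough'' is a bit misleading---the paper actually uses the even simpler observation $\rho_f(P^2)=\rho_f(P)\le |P|-1<|P|^2-|P|$, which avoids the residue-class-of-$1$ argument entirely, though your version is equally valid.
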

\begin{proof}
By lemma \ref{bound-lem}, the convergence of the sum $\sum_P{\frac{1}{|P|^2-|P|}}$ implies the convergence of the product in $c_{f,2}$. Consequently, $c_{f,2}=0$ if and only if some term in the product vanishes, which happens if and only if there is some $P$ such that $\rho_f\left(P^2\right)=|P|^2-|P|$. However, from Hensel's Lemma (lemma \ref{Hensel's Lemma}) it follows that for $P$ with $\deg P>\max\{\deg\Delta(f),\deg w_f\}$ we have that $\rho_f\left(P^2\right)=\rho_f(P)\leq |P|-1<|P|^2-|P|$. This proves that $(1)\Leftrightarrow(4)$ and $(5)\Rightarrow(1)$. Also, obviously $(2)\Rightarrow(3)$ and $(4)\Rightarrow(5)$. Now, if $c_{f,2}>0$, then by Theorem \ref{main thm} there is $N$ sufficiently large such that for every $n>N$ there exists a prime $P$ of degree $n$ such that $f(P)$ is square-free. This proves $(1)\Rightarrow(2)$. And finally, if there is some $P\in\fq[t]$ with $\deg P>\max\{\deg\Delta(f),\deg w_f\}$ such that $f(P)$ is square-free, then for all prime $p\in\fq[t]$ with $\deg p\leq\max\{\deg\Delta(f),\deg w_f\}$, we have $p\neq P$, and since they are both prime it follows that $p\nmid P$. But $f(P)$ is square-free, thus we also have $p^2\nmid f(P)$. This proves $(3)\Rightarrow(5)$, which completes the proof of the equivalence of the conditions in the proposition.
\end{proof}


\subsection{Final comments}
We make a number of final remarks regarding the proofs in this
paper.
\begin{enumerate}

\item Poonen used a technique that consists of looking only at part of the coordinates of $a\in \fq[t]^{N}$ such that $\exists P, \deg P\geq M, P^2\mid f(a)$, then proving that fixing those coordinates leaves $O(1)$ options for the rest of the coordinates. However, the range of the rest of the coordinates depends on $n$. This proves that the density of the desired set is 0. This technique is not available when working with one variable (last part of the proof of Lemma 5.1 in \cite{Poonen})

\item The reason we need to find $g(x)\in\fq[t][x]$ such
that $P^2\mid f(a)\Longrightarrow P\mid f(a),g(a)$ is in order to use a method to reduce the problem to the case where $g$ depends on one variable less than $f$ does. This allows us, as explained in (1) above, to fix the variables appearing in $g$ and determine the amount of possible values for the variable that does not appear in $g$, but appears in $f$ (last part of the proof of Lemma 5.1 in \cite{Poonen}).

\end{enumerate}

\end{document}